\newtheorem{thm}{Theorem}[section]
\newtheorem{defi}[thm]{Definition}
\newtheorem{lem}[thm]{Lemma}
\newtheorem{prop}[thm]{Proposition}
\newtheorem{rmk}[thm]{Remark}
\def\a1{HHH}
\begin{document}

\title[On a second critical value for the local existence in Lebesgue spaces]{On a second critical value for the local existence of solutions in Lebesgue spaces}

\author{Brandon Carhuas-Torre, Ricardo Castillo, Miguel Loayza}

\address{Ricardo Castillo. Departamento de Matem\'{a}tica, Facultad de Ciencias, Universidad del B\'{\i}o-B\'{\i}o, Avenida Collao 1202, Casilla 5-C, Concepci\'{o}n, B\'{\i}o B\'{\i}o, Chile}
\email{rcastillo@ubiobio.cl}

\address{Brandon Carhuas-Torre and Miguel Loayza\footnote{Corresponding Author}. Departamento de Matem\'{a}tica, Universidade Federal de Pernambuco, Recife, Brazil}
\email{brandon.carhuas@ufpe.br; miguel.loayza@ufpe.br}

\date{ \today }

\begin{abstract} 
We provide new conditions for the local existence  of solutions  to the time-weighted parabolic  equation $ u_t - \Delta u = h(t)f(u) \mbox{ in } \Omega \times (0,T),$ where $ \Omega $ is a arbitrary smooth domain, $f\in C(\mathbb{R})$, $h\in C([0,\infty))$ and $u(0)\in L^r(\Omega)$. As consequence of our results, considering a suitable behavior of the non-negative initial  data, we obtain a second critical value $\rho^\star = 2r/(p-1),$ when $f(u)=u^p$ and $p> 1 + 2r/N$,
which determines the existence (or not) of a local solution $u \in L^\infty((0,T), L^r(\Omega)).$ 

\bigskip
\noindent \emph{Keywords:}  Local existence; Lebesgue's spaces; Critical values; Uniqueness

\noindent \emph{MSC2020:} 35A01; 35B33; 35D30; 35K58
\end{abstract}
\maketitle

\section{Introduction}
Let $\Omega\subset \mathbb{R}^N$ be a domain (bounded or unbounded) with smooth boundary $\partial \Omega$ whenever it exists. The local existence of non-negative solutions of the semilinear problem
\begin{equation}\label{Eq1}
 \left\{\begin{array}{rclcl}
 u_{t}- \Delta u &= &f(u) &  \text{in}&  \Omega \times (0,T),   \\
 u&= & 0  &  \text{on}&  \partial \Omega \times (0,T),\\
 u(0)&= & u_{0}\geq 0 &  \text{in}&  \Omega,
      \end{array}\right.
\end{equation}
with  $f \in C([0,\infty))$ non-decreasing, $u_0\in L^r(\Omega), u_0\geq 0$ has been completely characterized by  Laister et al. \cite{Laister1} in the case that $\Omega $ is either bounded  or $\Omega = \mathbb{R}^N$. They showed the existence of a critical value  
\begin{equation}\label{Cr.uno}
p^\star=1+\frac{2r}{N}    
\end{equation}
so that 
\begin{itemize}
  \item When $\Omega$ is bounded, problem (\ref{Eq1}) has a local non-negative $L^r-$solution for every $u_0 \in L^{r}(\Omega), u_0\geq 0$  if and only if
  \begin{equation} \label{1TT}
  \limsup_{t \to \infty} t^{-p^{\star}}f(t)<\infty, \mbox{ if }r>1,
  \end{equation}
  \begin{equation} \label{2TT}
  \int_1^\infty \sigma^{-p^{\star}} F(\sigma)d\sigma <\infty,\mbox{ where } F(\sigma)= \sup_{1\leq t \leq \sigma} f(t)/t, \mbox{ if }r=1.
  \end{equation}
  \item  The same result holds in $\Omega=\mathbb{R}^N$ replacing conditions (\ref{1TT}) and (\ref{2TT}) by
$$
  \limsup_{t \to \infty} t^{-p^{\star}}f(t)<\infty \ \mbox{ and } \ \limsup_{t \to 0} f(t)/t < \infty,
$$
$$
  \int_1^\infty \sigma^{-p^{\star}} G(\sigma)d\sigma <\infty \ \mbox{ and } \ \limsup_{t \to 0} f(t)/t < \infty,
$$
respectively, where $G(\sigma)= \sup_{0< t \leq \sigma} f(t)/t.$
\end{itemize}
Moreover, when the condition (\ref{1TT}) does not hold, that is $\limsup_{t\to \infty} t^{-p^\star} f(t)=+\infty$, they  provide a suitable non-negative initial data $u_0 \in L^r( \Omega)$ such that the problem (\ref{Eq1}) does not admit a local non-negative  $L^r-$solution.  Thus,  naturally, the following two questions arise: what does happen with the local existence in this case? Is it possible to define a class of initial data where the local existence hold?  As far as we know there are few results about this and only in the case $f(s)=s^q$, $q>1$  see \cite{Haraux}, \cite{Miyamoto}. 

Henceforth, without loss of generality, we assume that $0\in \Omega$ and $B_a \subset \Omega$, where $B_a$ denotes the open ball with radius $a>0$ centered at $0$. In order to answer the questions raised above we consider initial data in the sets $\mathcal{I}^{\rho,r}$ and $\mathcal{I}_{\rho,r}$, where
$$
\mathcal{I}^{\rho,r} =   \{ \psi \in L^r (\Omega); \psi \geq 0  \mbox{ and }  |x|^{\rho}  \psi^r(x)   \leq  K ~\chi_{B_a}(x)  \mbox{ a.e. in  }  \Omega, \mbox{ for some }K, a>0 \}, \\
$$
 $$
\mathcal{I}_{\rho,r} =   \{ \psi \in L^r (\Omega); \psi \geq 0  \mbox{ and } |x |^{\rho} \psi^r(x) \geq  K \chi_{B_a}(x)\mbox{ a.e. in }   \Omega \mbox{ for some }K,a>0 \},
$$
for $ \rho >0$ and $r\geq1$. Here $\chi_{B_a}$ denotes the characteristic function  of the ball $B_a$.  As consequence of our result we find a new critical exponent when $f(u)=u^q, q>1$, see Remark \ref{Rem1} (i), which is given by
\begin{equation}\label{Cr.dos}
     \rho^\star = \frac{2r}{q-1}.
\end{equation}
We say that $\rho^*$ is the second critical value.


The sets $\mathcal{I}^{\rho,r}$ and $\mathcal{I}_{\rho,r}$  are inspired in the ones considered by Lee and Ni \cite{Lee} and Wang \cite{Wang}. In those works, the authors considered problem (\ref{Eq1}) for $f(u)=u^q, q>1,$ and initial data in $C(\mathbb{R}^N)\cap L^\infty(\mathbb{R}^N)$. For $q>q_F$, where $q_F=1+2/N$ is the Fujita exponent, they analyze the existence of global  and blow-up solutions when $u_0$ belongs to the sets  $I_\rho$ or $I^\rho$, where
$$
I_{\rho}=\{\psi \in C(\mathbb{R}^N), \psi\geq 0, \liminf_{|x|\to \infty} |x|^{\rho} \psi(x)>0 \},
$$
$$
I^{\rho}=\{\psi \in C(\mathbb{R}^N), \psi\geq 0, \limsup_{|x|\to \infty} |x|^{\rho} \psi(x)<\infty \}.
$$
They determined a critical value $\rho_F=2/(q-1)$, which is called the second critical value, such that if $0<\rho< \rho_F$ and $u_0\in I_\rho$ then the solution blows up in finite time, while if $\rho\geq \rho_F$ and $u_0\in I^{\rho}$ then the solution is global.

It is worth mentioning that initial data in the form $u_0=|\cdot |^{-\alpha} \chi_{B_a}$ was used firstly in \cite{Souplet2} to show the non-existence of non-negative solution for a  system related to (\ref{Eq1}), and  characterizations for parabolic problems related to (\ref{Eq1}), similar to the results obtained by \cite{Laister1}, have been obtained in \cite{Castillo2020}, \cite{Castillo2022} and for a fractional equation in \cite{Castillo2021}.

In our first result, we analyze the existence of non-negative $L^r-$solutions. These solutions are understood in the sense of Definition \ref{Def}.

\begin{thm}\label{Main1} Let  $f \in C([0,\infty))$ be convex,  non-decreasing and $f(0)=0$.
\begin{enumerate}[(i)]
  \item Assume there exists $ p_0 > p^\star$ such that $\limsup_{t \rightarrow \infty} t^{-p_0} f(t) < \infty$ .
  Let
  $$
  p_{\inf} = \inf \left\{p > p^\star; \limsup_{t \rightarrow \infty} t^{-p} f(t) < \infty \right \}
 $$
  and $0 < \rho < 2r/(p_{\inf} - 1).$ Then, for every nontrivial $u_0 \in \mathcal{I}^{\rho,r}$, problem $(\ref{Eq1})$ has a local non-negative $L^r-$solution on some interval $(0,T)$. Moreover, $u\in L^\infty_{loc}((0,T), L^\infty(\Omega))$ and $\|u(t)\|_{L^\infty}\leq Ct^{-\rho/2r}$ for all $t\in (0,T)$ and some constant $C>0$.
  
  \item  Assume that there exists $ p_0 > p^\star$ such that $\liminf_{t \rightarrow \infty} t^{-p_0} f(t) >0$. If
  $$
  p_{\sup} = \sup \left\{p > p^\star ; \ \liminf_{t \rightarrow \infty} t^{-p} f(t)>0 \right \}\leq \infty,
$$
  and $2r/(p_{\sup} - 1) < \rho < N$, then for every $u_0 \in \mathcal{I}_{\rho,r}$ problem $(\ref{Eq1})$ does not have a local non-negative $L^r-$solution.
  
\end{enumerate}
\end{thm}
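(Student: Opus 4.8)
The plan is to prove the two parts by quite different mechanisms: part (i) is a constructive existence argument built on the variation-of-constants formula and heat-kernel smoothing estimates adapted to the weighted class $\mathcal{I}^{\rho,r}$, while part (ii) is a non-existence argument that tests any putative solution against a suitable positive subsolution (or against the heat semigroup acting on the singular initial datum) and derives a contradiction from a differential inequality that blows up instantaneously.

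For part (i): the key observation is that $u_0 \in \mathcal{I}^{\rho,r}$ means $u_0(x) \le K^{1/r} |x|^{-\rho/r} \chi_{B_a}(x)$, so $u_0 \le C|\cdot|^{-\rho/r}$ with $\rho/r < 2/(p_{\inf}-1)$. I would first record the smoothing estimate $\|e^{t\Delta} |\cdot|^{-\rho/r}\|_{L^\infty(\Omega)} \le C t^{-\rho/2r}$ (valid since $\rho/r < N$, which is forced by $\rho < N$; in fact here $\rho/r < 2/(p_{\inf}-1) < N/r$ because $p_{\inf} > p^\star = 1+2r/N$), and more generally $\|e^{t\Delta}u_0\|_{L^\infty} \le C t^{-\rho/2r}$ for $u_0\in\mathcal{I}^{\rho,r}$. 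Then I would set up the standard iteration
\begin{equation*}
u_{n+1}(t) = e^{t\Delta}u_0 + \int_0^t e^{(t-s)\Delta} h(s) f(u_n(s))\, ds, \qquad u_0(t)=e^{t\Delta}u_0,
\end{equation*}
and look for a fixed point in the complete metric space
\begin{equation*}
X_T = \{ u \in L^\infty((0,T),L^r(\Omega)) : \|u\|_X := \sup_{0<t<T} t^{\rho/2r}\|u(t)\|_{L^\infty} \le M \},
\end{equation*}
for suitable $M,T>0$. The growth hypothesis gives $f(t) \le C(1+t^{p})$ for any $p\in(p^\star, p_{\inf}+\varepsilon)$ with the $\limsup$ finite, so $f(u_n(s)) \le C(1 + (M s^{-\rho/2r})^{p})$; convexity and $f(0)=0$ let me also control the difference $f(u)-f(v)$ by $\le f'(\max)(|u-v|)$ and reduce the contraction estimate to the same kind of integral. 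The crucial time-integrability is
\begin{equation*}
t^{\rho/2r}\int_0^t (t-s)^{-\rho/2r}\, s^{-p\rho/2r}\, ds = B\!\left(1-\tfrac{\rho}{2r},\, 1-\tfrac{p\rho}{2r}\right) t^{1-(p-1)\rho/2r},
\end{equation*}
which is finite and gives a positive power of $t$ precisely when $1 - p\rho/2r > 0$, i.e.\ $\rho < 2r/(p-1)$; choosing $p$ slightly above $p_{\inf}$ we land in the admissible range by the hypothesis $\rho < 2r/(p_{\inf}-1)$. This makes the nonlinear term small as $T\to 0$, closing both the self-map and the contraction, and the resulting fixed point automatically satisfies $\|u(t)\|_{L^\infty} \le Ct^{-\rho/2r}$ and $u\in L^\infty_{loc}((0,T),L^\infty)$; that it is an $L^r$-solution in the sense of Definition \ref{Def} follows by passing to the limit in the integral equation, using $u_0\in L^r$ and dominated convergence.

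For part (ii): suppose a non-negative $L^r$-solution $u$ exists on $(0,T)$, so in particular $u(t) \ge e^{t\Delta}u_0$ (using $f\ge 0$ and the Duhamel formula, or the comparison principle). Since $u_0\in\mathcal{I}_{\rho,r}$ we have $u_0(x) \ge K^{1/r}|x|^{-\rho/r}\chi_{B_a}(x)$, and a lower heat-kernel bound gives $(e^{t\Delta}u_0)(x) \gtrsim t^{-\rho/2r}$ on a ball of radius $\sim\sqrt t$ around the origin for small $t>0$. The idea is then to feed this lower bound back into Duhamel: plugging $f(u) \ge f(e^{s\Delta}u_0) \gtrsim (e^{s\Delta}u_0)^{p}$ for $p$ close to $p_{\sup}$ (using $\liminf t^{-p}f(t)>0$, so $f(t)\ge c t^p$ for large $t$, and restricting to the region where $e^{s\Delta}u_0$ is large, i.e.\ small $s$ and $|x|\lesssim\sqrt s$) and integrating, one finds that $u(t,0)$ — or a spatial average of $u$ near the origin — majorizes
\begin{equation*}
\int_0^t (t-s)^{-N/2}\!\!\int_{|y|\lesssim\sqrt s}\! e^{-c|y|^2/(t-s)} s^{-p\rho/2r}\, dy\, ds \;\gtrsim\; \int_0^t s^{N/2 - p\rho/2r}\, ds,
\end{equation*}
which \emph{diverges} as soon as $N/2 - p\rho/2r \le -1$, i.e.\ $\rho \ge 2r(N/2+1)/p$; iterating this bootstrap (each pass raising the power on $s$) pushes the exponent down until, under the hypothesis $\rho > 2r/(p_{\sup}-1)$, one obtains that $u(t)\notin L^r$ for any $t>0$, or more directly that the $L^r$-norm (or the local integral) is infinite, contradicting $u\in L^\infty((0,T),L^r)$. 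The cleanest route may instead be the weak-solution / test-function method of Baras–Pierre type: integrate the equation against $\varphi(x)\psi(t)$ with $\psi$ supported near $t=0$ and $\varphi$ a fixed cutoff, use Jensen to get $\iint h f(u)\varphi\psi \ge c(\iint u\varphi\psi)^p / (\cdots)$, and show the initial-data term $\int u_0\varphi$ is too large relative to the available decay — the singularity $|x|^{-\rho/r}$ with $\rho$ above threshold being exactly what breaks the inequality.

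I expect the main obstacle to be part (ii): making the lower bound rigorous requires careful two-sided heat-kernel estimates in a general smooth domain $\Omega$ (not just $\mathbb{R}^N$ or a ball), honest control of the set where $e^{s\Delta}u_0$ is comparable to $s^{-\rho/2r}$, and a clean bootstrap/Gronwall argument showing the iterated lower bounds force blow-up of the relevant norm for \emph{every} $T>0$; tracking the exact exponent so that the threshold comes out to be $2r/(p_{\sup}-1)$ rather than something weaker is the delicate point. In part (i) the only mildly technical issues are the Beta-function integrability at the endpoint (ensuring $\rho<2r$ and $p\rho<2r$, both consequences of the hypotheses) and verifying the solution meets Definition \ref{Def}, which are routine.
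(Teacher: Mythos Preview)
Your contraction-mapping scheme is different from the paper's route, and as written it has a real integrability gap. The Beta integral you display,
\[
t^{\rho/2r}\int_0^t (t-s)^{-\rho/2r}\, s^{-p\rho/(2r)}\, ds,
\]
requires $p\rho/(2r)<1$ for convergence at $s=0$; but the hypothesis $\rho<2r/(p_{\inf}-1)$ only yields $(p-1)\rho/(2r)<1$ for $p$ slightly above $p_{\inf}$, and in general $2r/\rho$ may sit strictly below $p_{\inf}$ (take e.g.\ $f(t)=t^{q}$, $q>p^\star$, and $\rho$ close to $2r/(q-1)$). So the self-map estimate fails before you ever get to contraction. (Also, your stated power $t^{1-(p-1)\rho/2r}$ does not match the Beta integral you wrote; it matches the simpler integral $t^{\rho/2r}\int_0^t s^{-p\rho/(2r)}\,ds$, which has the same convergence problem.)

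The paper sidesteps this by a supersolution argument rather than contraction: it takes $w(t)=A\,S(t)u_0$, uses convexity to write $f(w)=G(w)\,w$ with $G(t)=f(t)/t$ non-decreasing, bounds $G(w(\sigma))\le G(\|w(\sigma)\|_{L^\infty})\le G(C\sigma^{-\rho/2r})$, and then exploits the \emph{exact} semigroup identity $S(t-\sigma)S(\sigma)u_0=S(t)u_0$ to pull the remaining factor $w(\sigma)$ through to $w(t)$. What is left is
\[
\int_0^t G\!\left(C\sigma^{-\rho/2r}\right)\,d\sigma \;\lesssim\; \int_0^t \sigma^{-(p-1)\rho/(2r)}\,d\sigma,
\]
which needs only $(p-1)\rho/(2r)<1$ --- exactly the hypothesis. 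A monotone iteration (Lemma~\ref{Lem22}) between $0$ and $w$ then produces the solution. Your scheme cannot use this semigroup cancellation because, after the first iterate, $u_n(\sigma)$ is no longer $S(\sigma)u_0$. If you insist on contraction, you would have to enlarge $X_T$ to carry an $L^r$ norm as well and use $|f(u)|\le G(\|u\|_{L^\infty})|u|$ before applying $S(t-\sigma)$; this is fixable but noticeably heavier than the paper's one-line supersolution check.

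\textbf{Part (ii).} Here you are missing the decisive tool. The paper does \emph{not} bootstrap lower bounds through Duhamel, nor use a Baras--Pierre capacity argument. Instead it invokes a clean Osgood-type obstruction (Proposition~\ref{Prop1}): for any non-negative integral supersolution one has
\[
\tau \;\le\; \int_{\|S(\tau)u_0\|_{L^\infty}}^{\infty}\frac{d\sigma}{f(\sigma)} \qquad\text{for all small }\tau>0,
\]
proved by Jensen's inequality (using convexity and $f(0)=0$ to get $f(S(t)u)\le S(t)f(u)$) and a one-variable ODE comparison. Since $u_0\ge K^{1/r}|\cdot|^{-\rho/r}\chi_{B_a}$, a lower heat-kernel bound (Lemma~\ref{Lem1}) gives $\|S(\tau)u_0\|_{L^\infty}\ge c\,\tau^{-\rho/2r}$; and $\liminf_{t\to\infty}t^{-p}f(t)>0$ for $p$ just below $p_{\sup}$ gives $\int_z^\infty d\sigma/f(\sigma)\le Cz^{1-p}$ for large $z$. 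Plugging in yields $\tau\le C\,\tau^{(p-1)\rho/(2r)}$, i.e.\ $\tau^{\,1-(p-1)\rho/(2r)}\le C$, which is violated as $\tau\to0$ precisely when $\rho>2r/(p-1)$; letting $p\uparrow p_{\sup}$ gives the sharp threshold. Your iterated Duhamel lower bounds might eventually reach a contradiction, but you have not shown how the iteration terminates or why it captures the exact exponent $2r/(p_{\sup}-1)$; the Osgood inequality does this in one step and is what you are missing.
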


\begin{rmk}\label{Rem1} Here are some comments about Theorem \ref{Main1}.
\begin{enumerate}[(i)]
  \item Suppose that $f(t) = t^q,$ with $q>p^\star$ and $p^\star$ given by (\ref{Cr.uno}). Then $p_{\sup}=p_{\inf}=q$ and Theorem \ref{Main1} implies that if $ 0 <\rho < \rho^\star $ and $u_0 \in \mathcal{I}^{\rho,r},$ then there exists a local non-negative $L^r-$solution of (\ref{Eq1}), and if  $\rho^\star < \rho < N$ and $u_0 \in \mathcal{I}_{\rho,r},$  then there is no a local non-negative $L^r-$solution of (\ref{Eq1}). Thus, the value $\rho^\star$, given by (\ref{Cr.dos}), is the second critical value for the existence of local non-negative $L^r-$solutions.

  \item Other convex and non-decreasing functions where it is possible to apply Theorem \ref{Main1} are:
$ f(t)= e^{\alpha t} -1$, with  $\alpha>0$  and $f(t)= (1+t)^{q} [\ln(1+t)]^s$ with  $s\geq1,  q>p^\star. $
In the first case $p_{\inf}$ does not exist but $p_{\sup}=\infty,$ and in the second case $p_{\sup}=p_{\inf}=q$.

  \item The convexity condition on $f$ and $f(0)=0$ is required in (i) to guarantee that the function $t\in (0,\infty)  \mapsto f(t)/t$ is well defined and non-decreasing, and in (ii) to  show the estimate $f(S(t)u_0) \leq S(t)f(u_0),$ see the proof of Proposition \ref{Prop1}. 

  \item  The condition $\liminf_{t \rightarrow \infty} t^{-p_0} f(t)>0$, for some $p_0 > p^\star$, implies that the one (\ref{1TT}) does not hold.  Indeed, we argue by contradiction and assume that (\ref{1TT}) holds. Then  $f\left ( t \right )\leq C t^{p^\star}$ for $t$ sufficiently large and some constant $C>0$. Thus, $t^{-p_0}f\left ( t \right )\leq Ct^ { p^\star -p_0}$ for $t$ large and we have  a contradiction letting $t\to \infty$.

\end{enumerate}
\end{rmk}

The arguments used to deal with problem (\ref{Eq1}) can also be used to address the time-weighted parabolic problem
\begin{equation}\label{Eqq2}
 \left\{\begin{array}{rclllc}
 u_{t}- \Delta u &= &h(t)  g(u) &  \text{in}&  \Omega \times (0,T),   \\
 u&= & 0 &  \text{in}&  \partial \Omega \times (0,T),\\
 u(0)&= & u_{0}  &  \text{in}&  \Omega,
      \end{array}\right.
\end{equation}
where $g \in C(\mathbb{R})$, $u_0 \in L^r(\Omega)$ without the signal restriction and a general weight function $h \in C([0,\infty))$.


Problem (\ref{Eqq2}), with $h(t)=1, g(u)=|u|^{p-1}u, p>1$, has been widely studied when either $\Omega= \mathbb{R}^N$ or $\Omega$ is a bounded domain. From the results of \cite{Brezis}, \cite{Celik}, \cite{Weissler1}, \cite{Weissler3} it is well known that: 
\begin{itemize}
    \item If either $p<p^\star$ and $r\geq 1$ or $p=p^\star$ and $r>1$, then (\ref{Eqq2}) admits a local $L^r-$solution.

    \item If either $p>p^\star$ and $r\geq 1$ or $p=p^\star$ and $r=1$ there exist $u_0\in L^r(\Omega), u_0\geq 0$ so that problem (\ref{Eqq2}) does not admit a nonnegative local $L^r-$solution.
\end{itemize}
Recently, Miyamoto \cite{Miyamoto} studied the so-called doubly critical case, that is, $p=p^\star, r=1$, considering $h(t)=1$ and the set
\begin{equation}\label{X.uno}
X_\alpha=\left \{ u_0\in L_{loc}^{1}\left ( \mathbb{R}^{N} \right );\int_{\mathbb{R}^{N}}^{}\left | u_0 \right |\left [ \log \left ( e+\left | u_0 \right | \right ) \right ]^{\alpha}dx< \infty  \right \}\subset L^{1}\left ( \mathbb{R}^{N} \right ).
\end{equation}
He showed:

\begin{itemize}
    \item  If $u_0 \in X_\alpha$ for some $\alpha \geq N/2$, then problem (\ref{Eqq2}) admits a local $L^1-$solution. 
    
    \item For each $0 \leq \alpha < N/2$ there exists a non-negative $u_0 \in X_\alpha$ so that the  problem (\ref{Eqq2}) does not have a local non-negative $L^1-$solution. 

\end{itemize}

Assuming that $g$ is  locally Lipschitz and $g(0)=0$ it is well defined the non-decreasing function $G:[0,\infty )\rightarrow [0,\infty )$ given by
 \begin{equation}\label{h1}
 G(s)= \sup_{0< |t| \leq s}\frac{g\left ( t \right )}{t}, \quad s>0; \quad G(0)=0.
 \end{equation}
This function was used by Laister and Sier\.{z}\k{e}ga \cite{Laister2} to show the existence of a local $L^1-$solution for problem (\ref{Eqq2}), with $h=1$ and $u_0\in L^1(\mathbb{R}^N)$, under the integral condition 
\begin{equation}\label{L1.uno}
 \int_1^\infty \sigma^{-(1+2/N)} G(\sigma) d\sigma<\infty.  
\end{equation}

As in Theorem \ref{Main1}, we analyze the problem (\ref{Eqq2}) for $u_0 \in \mathcal{I}^{\rho,r}$ and $u_0\in \mathcal{I}_{\rho,r}$. We obtain the following result.



\begin{thm}\label{Main2} 
 \begin{enumerate}[(i)]
          \item  Suppose that $g$ is locally lipschitz, non-decreasing  and $g(0)=0$. For every $u_0 \in L^r(\Omega)$, with $|u_0| \in \mathcal{I}^{\rho,r}$ and $0<\rho<N$,  problem $(\ref{Eqq2})$ has a local $L^r-$solution on some interval $(0,T)$ if the following conditions holds
          \begin{equation}\label{h4}
     \int_{1}^{\infty }\sigma^{-\left ( 1+2r/\rho  \right )}h\left (  (A C_0 K^{1/r})^{2r/\rho}\sigma^{-2r/\rho} \right )G\left ( \sigma \right )d\sigma< \infty,
 \end{equation}
     where $A>1$, $C_0$ is given by Lemma \ref{Lem2} (for $q_1=q_2=\infty$) and $K$ is the constant of $|u_0|$ in the set $\mathcal{I}^{\rho,r}$. Moreover, $u\in L^\infty_{loc}((0,T), L^\infty(\Omega))$ and  there exists a constant $C>0$ such that $t^{\rho/2r}\|u(t)\|_{\infty}\leq C$ for $t\in (0,T)$.

     \item Assume that the restriction $g|_{[0,\infty)}$ is non-decreasing, convex, $g(0)=0$  and
     $$ p_{\sup} =  \sup \left\{p > p^\star ; \ \liminf_{t \rightarrow \infty} t^{-p} g(t)>0 \right \} < \infty.$$ If there exists $0<\epsilon<p_{\sup}-1$ such that
   \begin{equation}\label{BLOWUP}
      \limsup _{t \longrightarrow 0^{+}}t^{-\frac{\rho }{2r}\left(p_{\sup}-1-\epsilon \right )}\int_{0}^{t}h\left( \sigma \right )d\sigma= \infty,
   \end{equation}
then for every $u_0 \in \mathcal{I}_{\rho,r}$, $0<\rho<N$, problem $(\ref{Eqq2})$ does not have a non-negative local  $L^r-$solution.

 \end{enumerate}
\end{thm}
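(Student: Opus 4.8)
The plan is to handle the two parts separately, in each case reducing to the already-established machinery for problem (\ref{Eq1}) from Theorem \ref{Main1} and its supporting lemmas, but keeping track of the time weight $h$.

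For part (i), I would build the solution by a monotone iteration / contraction scheme on a short interval $(0,T)$, working in the weighted space of functions $u$ with $t^{\rho/2r}\|u(t)\|_\infty$ bounded. First I would use the hypothesis $|u_0|\in\mathcal{I}^{\rho,r}$ to estimate the linear term: by Lemma \ref{Lem2} (with $q_1=q_2=\infty$, giving the constant $C_0$) together with the pointwise bound $|x|^\rho |u_0|^r(x)\le K\chi_{B_a}$, one gets $\|S(t)|u_0|\|_\infty \le C_0 K^{1/r} t^{-\rho/2r}$ for small $t$; this explains the precise form of the argument of $h$ appearing in (\ref{h4}). Then, since $g$ is locally Lipschitz with $g(0)=0$, I would control $g(u)$ by $|g(u)|\le G(\|u(s)\|_\infty)\,|u|$ with $G$ as in (\ref{h1}), and estimate the Duhamel term
\[
\int_0^t \|S(t-s)\,h(s)g(u(s))\|_\infty\,ds \le \int_0^t h(s)\,G\!\big(\|u(s)\|_\infty\big)\,\|u(s)\|_\infty\,ds .
\]
Plugging in the ansatz $\|u(s)\|_\infty \le A C_0 K^{1/r} s^{-\rho/2r}$, the change of variables $\sigma = (AC_0K^{1/r})^{2r/\rho} s^{-2r/\rho}$ turns this integral into precisely the one in (\ref{h4}); finiteness of that integral, together with its tail vanishing as $T\to 0$, lets me close the fixed-point argument on a sufficiently small interval, and simultaneously yields the asserted bound $t^{\rho/2r}\|u(t)\|_\infty\le C$ and membership in $L^\infty_{loc}((0,T),L^\infty(\Omega))$. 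Uniqueness and the fact that $u$ is an $L^r$-solution in the sense of Definition \ref{Def} follow from the local Lipschitz bound and $u_0\in L^r(\Omega)$.

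For part (ii), I would argue by contradiction, assuming a non-negative local $L^r$-solution $u$ on $(0,T)$ exists for some $u_0\in\mathcal{I}_{\rho,r}$. The strategy mirrors Theorem \ref{Main1}(ii): using that $g|_{[0,\infty)}$ is convex with $g(0)=0$, Jensen's inequality for the heat semigroup gives $g(S(t)u_0)\le S(t)g(u_0)$; combined with the Duhamel formula and $g$ non-decreasing this yields a lower bound of the form $u(t)\ge S(t)u_0 + \int_0^t h(s)\,g(S(s)u_0)\,ds$ (as in Proposition \ref{Prop1}). Since $u_0\in\mathcal{I}_{\rho,r}$, a heat-kernel lower bound on $B_a$ gives $S(s)u_0 \gtrsim s^{-\rho/2r}$ on a shrinking ball, and the hypothesis $\liminf_{t\to\infty}t^{-p}g(t)>0$ for every $p<p_{\sup}$ gives $g(\tau)\ge c\,\tau^{\,p_{\sup}-\epsilon}$ for large $\tau$. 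Evaluating the integral term at a suitable point forces
\[
\|u(t)\|_{\text{(some norm)}} \gtrsim t^{-\frac{\rho}{2r}(p_{\sup}-1-\epsilon)}\int_0^t h(\sigma)\,d\sigma,
\]
whose $\limsup$ as $t\to 0^+$ is infinite by (\ref{BLOWUP}) — contradicting that $u$ is a genuine local $L^r$-solution (which must stay in $L^\infty((0,t),L^r)$, or at least have controlled behavior near $t=0$ after one Duhamel step).

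The main obstacle I anticipate is in part (ii): making the lower-bound bootstrap rigorous near $t=0$, i.e. choosing the right intermediate time scale and spatial region so that the heat-kernel lower bound for $S(s)u_0$ on $\mathcal{I}_{\rho,r}$ data is quantitatively matched against the superlinear lower bound on $g$, and then showing the resulting blow-up of a norm of $u$ as $t\to 0^+$ genuinely contradicts the definition of an $L^r$-solution. The bookkeeping of constants (the factor $A>1$, the constant $C_0$ from Lemma \ref{Lem2}, and the constant $K$) in part (i) is routine by comparison but must be arranged so that the argument of $h$ in the iteration matches (\ref{h4}) exactly; I expect the time weight $h$ being merely continuous (not monotone) causes no trouble there, since only the integral (\ref{h4}) — not pointwise bounds on $h$ — is used.
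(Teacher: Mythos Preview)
Your plan for part (i) contains a genuine gap. The displayed $L^\infty$-norm estimate
\[
\int_0^t \|S(t-s)\,h(s)g(u(s))\|_\infty\,ds \le \int_0^t h(s)\,G\big(\|u(s)\|_\infty\big)\,\|u(s)\|_\infty\,ds
\]
together with the ansatz $\|u(s)\|_\infty\le M s^{-\rho/2r}$ (with $M=AC_0K^{1/r}$) does \emph{not} produce condition (\ref{h4}) after the change of variables: the extra factor $\|u(s)\|_\infty\sim s^{-\rho/2r}$ survives, and one arrives at the tail of $\int\sigma^{-2r/\rho}h(\cdots)G(\sigma)\,d\sigma$ (multiplied by $t^{\rho/2r}$) rather than $\int\sigma^{-(1+2r/\rho)}h(\cdots)G(\sigma)\,d\sigma$. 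For power nonlinearities the two happen to coincide, but in general the norm-based condition is strictly stronger than (\ref{h4}). The paper avoids this by working \emph{pointwise}: it takes the supersolution $w(t)=AS(t)|u_0|$ and subsolution $v(t)=-AS(t)|u_0|$, bounds $G(AS(\sigma)|u_0|)\le G(M\sigma^{-\rho/2r})$ (a constant in $x$, hence it can be pulled through $S(t-\sigma)$), and then uses the semigroup identity $S(t-\sigma)\big[AS(\sigma)|u_0|\big]=AS(t)|u_0|=w(t)$. The factor that would have been $\|u(s)\|_\infty$ in your estimate is thus absorbed into $w(t)$ instead of contributing an $s^{-\rho/2r}$ inside the integral, and exactly (\ref{h4}) emerges. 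Existence then follows from the monotone iteration of Lemma~\ref{Lem22}, with the subsolution verification using (\ref{h3}) for the sign-indefinite case.

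For part (ii) your outline diverges from the paper's argument, and the obstacle you flag is real for your route but absent in theirs. The paper does not try to show that some norm of $u$ blows up as $t\to 0^+$. Instead it applies Proposition~\ref{Prop1} directly: any nonnegative integral supersolution forces the scalar inequality
\[
\int_0^\tau h(\sigma)\,d\sigma \;\le\; \int_{\|S(\tau)v_0\|_{L^\infty}}^\infty \frac{d\sigma}{f(\sigma)},
\]
and then the heat-kernel lower bound $\|S(\tau)v_0\|_{L^\infty}\ge c_N'\tau^{-\rho/2r}$ from Lemma~\ref{Lem1} combined with $f(\sigma)\ge C\sigma^{p_{\sup}-\epsilon}$ for large $\sigma$ yields
\[
1 \;\ge\; C'\,\tau^{-\frac{\rho}{2r}(p_{\sup}-1-\epsilon)}\int_0^\tau h(\sigma)\,d\sigma,
\]
which contradicts (\ref{BLOWUP}) for small $\tau$. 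The contradiction lives entirely in this ODE-type inequality; no control of $\|u(t)\|_{L^r}$ near $t=0$ beyond finiteness is needed.
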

\begin{rmk}\label{Rem2} Here are some comments about Theorem \ref{Main2}.
\begin{enumerate}[(i)]
\item From (\ref{L1.uno}) and (\ref{h4}) we see that 
$$\int_1^\infty \sigma^{-(1+2r/\rho)} G(\sigma)d\sigma < \int_1^\infty \sigma^{-1+2/N} G(\sigma) d\sigma,$$
when $r=1$ and $h=1$. Hence, under hypotheses of Theorem \ref{Main2} one can get a solution of problem (\ref{Eqq2}) for $u_0\in \mathcal{I}^{\rho,1}$ even if (\ref{L1.uno}) does not hold. 

\item If $g(t)=|t|^{q-1}t, t\in \mathbb{R}, q\geq p^\star$, then $G(s)=s^{q-1}$ and condition (\ref{h4}) is satisfied, for $h=1$, when $\rho(q-1)/2<r$. Hence, Theorem \ref{Main2} allows us to obtain a local $L^r-$solution for initial data in the set $\mathcal{I}^{\rho,r}$ when $1\leq r<N(q-1)/2$ and the doubly critical case $r=N(q-1)/2=1$. 

\item If $f\in C[0,\infty)$ is convex with $f(0)=0$, then defining $g(t)=0$ for $t\leq 0$ and $g(t)=f(t)$ for $t>0$ we have that $g$ is continuous and 
$$G(s)=\sup_{0<|t|\leq s} \frac{g(t)}{t}= \sup_{0<t\leq s} \frac{f(t)}{t}= \frac{f(s)}{s}.$$
Hence, by Theorem \ref{Main2}, assuming the hypotheses of Theorem \ref{Main1}, with $h(t)=t^{\beta}$, $\beta>-1$, we have:
\begin{itemize}
    \item If $u_0 \in \mathcal{I}^{\rho,r}$ and $0<\rho<2r(\beta+1)/(p_{\inf}-1)$ then problem (\ref{Eqq2}) admits a non-negative $L^r-$solution. 
    \item If $u_0\in \mathcal{I}_{\rho,r}$ and $2r(\beta+1)/(p_{\sup}-1)<\rho<N$, then problem (\ref{Eqq2}) does not admit a non-negative local $L^r-$solution.
\end{itemize}
In particular, when $\beta=0$ we recover Theorem \ref{Main1}, and for $f(t)=t^q, q>1$  the second critical value is given by $\rho^*=2r(\beta+1)/(q-1)$, see Remark \ref{Rem1}(i).

\end{enumerate}
\end{rmk}

Under an  integral condition stronger than (\ref{h4}) we are able to show the uniqueness of a local $L^r-$solutions of problem (\ref{Eqq2}). Given  $g\in C(\mathbb{R})$ locally lipschitz with $g(0)=0$, we define the non-decreasing function $L:[0,\infty )\rightarrow [0,\infty )$  by
$$
L\left ( s \right )= \sup _{\underset{u\neq v}{\left | u \right |,\left | v \right |\leq s}}\frac{g\left ( u \right )-g\left ( v \right )}{u-v}, \ s>0; \quad L(0)=0.
$$


Our uniqueness result is the following.

\begin{thm}\label{Main3} Let $g\in C(\mathbb{R})$ be  locally Lipschitz and $g(0)=0$. Problem (\ref{Eqq2}) admits an unique solution in the class
\begin{equation}\label{Con.uni}
\{ u\in L^{\infty }\left ( \left ( 0,T \right ),L^{r}\left ( \Omega  \right ) \right ); \sup_{t\in (0,T)} t^{\rho/2r}\|u(t)\|_{L^\infty}\leq C_1 \}
\end{equation}
if the following
 integral condition
\begin{equation}\label{def2}
    \int_{1}^{\infty }\sigma^{-\left ( 1+2r/\rho \right )}h\left (( C_1^{-1}\sigma)^{-2r/\rho} \right )L\left (  \sigma \right )d\sigma < \infty
\end{equation}
is satisfied.
\end{thm}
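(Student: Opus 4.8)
The plan is to prove uniqueness by the standard contraction/Gronwall argument adapted to the weighted $L^\infty$ class. Suppose $u$ and $v$ are two solutions in the class $(\ref{Con.uni})$ with the same initial data $u_0$. Writing them via the Duhamel formula, $u(t) = S(t)u_0 + \int_0^t S(t-s)\,h(s)\,g(u(s))\,ds$ and similarly for $v$, subtract to obtain
\begin{equation*}
u(t)-v(t) = \int_0^t S(t-s)\,h(s)\,\bigl[g(u(s))-g(v(s))\bigr]\,ds.
\end{equation*}
Since both $u$ and $v$ satisfy $\|u(s)\|_{L^\infty},\|v(s)\|_{L^\infty}\le C_1 s^{-\rho/2r}$, the definition of $L$ gives the pointwise bound $|g(u(s))-g(v(s))| \le L\bigl(C_1 s^{-\rho/2r}\bigr)\,|u(s)-v(s)|$ almost everywhere. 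The natural quantity to control is $w(t) := \sup_{0<s\le t} \|u(s)-v(s)\|_{L^r}$ (or the analogous weighted $L^\infty$ quantity); one must check $w$ is finite and, a priori, $w(t)\to 0$ as $t\to 0^+$, which follows since both solutions lie in $L^\infty((0,T),L^r(\Omega))$ and share the initial trace, using the continuity at $t=0$ built into the notion of $L^r$-solution (Definition \ref{Def}).

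Next I would estimate the $L^r$ norm of the Duhamel difference using the $L^q$–$L^r$ smoothing of the heat semigroup (Lemma \ref{Lem2}): taking $L^\infty$ data on the source term and landing in $L^r$ costs a factor $C_0 (t-s)^{-N/2r}$, while the nonlinearity is bounded by $L(C_1 s^{-\rho/2r})\|u(s)-v(s)\|_{L^\infty}$, and in turn $\|u(s)-v(s)\|_{L^\infty}\le 2C_1 s^{-\rho/2r}$. A more efficient route, which matches the exponents in $(\ref{def2})$, is to work directly with the weighted $L^\infty$ norm: estimate $t^{\rho/2r}\|u(t)-v(t)\|_{L^\infty}$ by
\begin{equation*}
t^{\rho/2r}\int_0^t C_0 (t-s)^{-N/2r}\,h(s)\,L\bigl(C_1 s^{-\rho/2r}\bigr)\,\|u(s)-v(s)\|_{L^\infty}\,ds,
\end{equation*}
using $\|u(s)-v(s)\|_{L^r}\le$ (something)$\cdot\|u(s)-v(s)\|_{L^\infty}^{\theta}$ on a bounded-measure region or, more simply, running the whole argument in the space where the source term is measured in $L^r$ so that only $(t-s)^{0}$ smoothing is needed and the weight $s^{\rho/2r}$ is carried along. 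The change of variables $s = (C_1^{-1}\sigma)^{-2r/\rho}$, i.e. $\sigma = C_1 s^{-\rho/2r}$, converts $\int_0^t s^{\text{(power)}} h(s) L(C_1 s^{-\rho/2r})\,ds$ precisely into the tail integral $\int_{C_1 t^{-\rho/2r}}^{\infty}\sigma^{-(1+2r/\rho)} h\bigl((C_1^{-1}\sigma)^{-2r/\rho}\bigr) L(\sigma)\,d\sigma$, whose finiteness is hypothesis $(\ref{def2})$; crucially this tail tends to $0$ as $t\to 0^+$.

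Combining these estimates yields an inequality of the form $w(t) \le \eta(t)\, w(t)$ for small $t$, where $\eta(t)\to 0$ as $t\to 0^+$ thanks to the convergence of the tail integral in $(\ref{def2})$ (and, if needed, the dominated-convergence behavior of the Beta-type integral coming from the $(t-s)^{-N/2r}$ factor). Choosing $t=T_0$ small enough that $\eta(T_0)<1$ forces $w(T_0)=0$, i.e. $u\equiv v$ on $(0,T_0)$. A continuation/bootstrap argument — restarting the problem at $t=T_0$ with common datum $u(T_0)=v(T_0)\in L^\infty(\Omega)$ and noting the equation with a shifted weight still satisfies a local Lipschitz estimate — extends uniqueness to all of $(0,T)$; alternatively one observes that the set $\{t\in(0,T): u(s)=v(s)\ \forall s\le t\}$ is nonempty, open and closed. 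The main obstacle I expect is bookkeeping the weight exponents so that the substitution produces exactly the integrand in $(\ref{def2})$ — in particular verifying that the power of $s$ generated by $t^{\rho/2r}(t-s)^{-N/2r}$ integrated against $ds$, together with the $L(C_1 s^{-\rho/2r})$ factor, matches $\sigma^{-(1+2r/\rho)}$ after substitution — and ensuring the a priori finiteness of $w$ near $t=0$, which requires care because the weighted bound only controls $\|u(t)-v(t)\|_{L^\infty}$ up to the blow-up rate $t^{-\rho/2r}$; this is where the membership in $L^\infty((0,T),L^r(\Omega))$ with common initial trace does the essential work.
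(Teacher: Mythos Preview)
Your core ingredients are right --- subtract Duhamel formulas, use the pointwise Lipschitz bound $|g(u(s))-g(v(s))|\le L(C_1 s^{-\rho/2r})|u(s)-v(s)|$, and recognize that the change of variables $\sigma=C_1 s^{-\rho/2r}$ turns $\int_0^t h(s)L(C_1 s^{-\rho/2r})\,ds$ into the tail of (\ref{def2}) --- but the proposal is considerably more convoluted than necessary and contains some confused bookkeeping. The displayed estimate with the factor $C_0(t-s)^{-N/2r}$ in an $L^\infty\!\to\!L^\infty$ bound is not correct: $S(t-s)$ is a contraction on $L^\infty$ with no decay factor, and going $L^\infty\!\to\!L^r$ is not a smoothing estimate (it needs $\Omega$ bounded and gives only $|\Omega|^{1/r}$). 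If instead you used $L^r\!\to\!L^\infty$ smoothing you would pick up $(t-s)^{-N/2r}$, but then the integrand must carry $\|u(s)-v(s)\|_{L^r}$, not $\|\cdot\|_{L^\infty}$; either way, when $N\ge 2r$ that factor is not integrable near $s=t$, so this route does not close.

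The paper takes precisely the simplest of the options you listed in passing: work entirely in $L^r$. Since $\|S(t-\sigma)\psi\|_{L^r}\le\|\psi\|_{L^r}$, one gets directly
\[
\|u(t)-v(t)\|_{L^r}\le \|u_0-v_0\|_{L^r}+\int_0^t h(\sigma)\,L\bigl(C_1\sigma^{-\rho/2r}\bigr)\,\|u(\sigma)-v(\sigma)\|_{L^r}\,d\sigma,
\]
and then a single application of Gronwall's inequality (the kernel $\sigma\mapsto h(\sigma)L(C_1\sigma^{-\rho/2r})$ is integrable on $(0,T)$ exactly by (\ref{def2})) yields $\|u(t)-v(t)\|_{L^r}\le \|u_0-v_0\|_{L^r}\exp\bigl[\int_0^T h(\sigma)L(C_1\sigma^{-\rho/2r})\,d\sigma\bigr]$, hence uniqueness when $u_0=v_0$. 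No weighted $L^\infty$ norm, no contraction-plus-continuation, no interpolation, and none of the ``a priori finiteness near $t=0$'' worries you raised are needed: membership in $L^\infty((0,T),L^r(\Omega))$ already makes $t\mapsto\|u(t)-v(t)\|_{L^r}$ bounded, which is all Gronwall requires.
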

\begin{rmk} Here are some comments about Theorem \ref{Main3}.
\begin{enumerate}[(i)]
    \item From Theorem \ref{Main2}(i) the set (\ref{Con.uni}) is not empty if $g$ is non-decreasing and $|u_0|\in \mathcal{I}^{\rho,r}$ with $0<\rho<N$.  
    \item Suppose $g\left ( t \right )=|t|^{p-1} t$ for $t\in \mathbb{R}$, $p>1$ and $h=1$. Then $L(s)=ps^{p-1}$ and condition (\ref{def2}) is verified for $p<1+2r/\rho$. 

    \item For $h=1$ a similar uniqueness result for problem (\ref{Eqq2}) and $u_0\in L^1(\mathbb{R}^N)$ was obtained in \cite{Laister2} when $\int_1^\infty \sigma^{-(1+2/N)} L(\sigma) d\sigma<\infty$.
\end{enumerate}
\end{rmk}

\section{Preliminaries}
We denote by $P_\Omega(x,y;t) $ the Dirichlet heat kernel associated to the operator $\partial_t-\Delta_\Omega,$
where $-\Delta_\Omega$ is the Dirichlet Laplacian for the open set $\Omega \subset \mathbb{R}^N$, that is, 
\begin{equation}\label{Sem}
S(t) u_0(x)= \int_{\Omega} P_{\Omega}(x,y;t) u_0(y) dy,
\end{equation}
for $u_0\in L^1(\Omega)$ and $\{S(t)\}_{t\geq 0}$ the heat semigroup. It is very well known that
\begin{equation}\label{Kernel}
P_D(x,y;t) \leq  P_{E}(x,y;t) \leq P_{N}(x,y;t)
\end{equation}
for $x,y \in D \subset E,$ where $D$ and $E$ are open subsets of $\mathbb{R}^N,$ and $P_N$ is defined by $$P_N(x,y;t):= P_{\mathbb{R}^N}(x,y;t)= (4\pi t)^{-N/2} e^{- |x-y|^2/4t}.$$

The following lemma is used in the proof of the non-existence results.  Item (i) was established in \cite{Laister1}, and item (ii)  in \cite{Castillo2020}.
\begin{lem} \label{Lem1}  Let $l, \delta>0$ be such that $B_{l + 2 \delta} \subset \Omega$ and $0 <\gamma<N$. There exist constants $c_N, c'_N>0$, depending only on $N$ and $0<\gamma<N$, such that
\begin{enumerate}[(i)]
  \item $S(t) \chi_{l} \geq c_N l^{N} (l + \sqrt{t})^{-N} \chi_{l + \sqrt{t}}~,$ for all $0<t\leq\delta^2. $
  \vspace{2mm}
  \item $S(t) |\cdot|^{-\gamma} \chi_{l} \geq c'_N \, t^{-\frac{\gamma}{2}} \, \chi_{\sqrt{t}}~,$ for all $0<t\leq \min\{\delta^2,l^2 \}. $
  \vspace{2mm}
\end{enumerate}

\end{lem}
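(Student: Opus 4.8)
\textbf{Proof proposal for Lemma \ref{Lem1}.}

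The plan is to reduce everything to the Gaussian (whole-space) heat kernel via the domain monotonicity (\ref{Kernel}): since $B_{l+2\delta}\subset\Omega$, for $x,y\in B_{l+2\delta}$ we have $P_\Omega(x,y;t)\ge P_{B_{l+2\delta}}(x,y;t)$, so it suffices to prove both lower bounds with $\Omega$ replaced by the fixed ball $B_{l+2\delta}$. The point of the hypothesis $B_{l+2\delta}\subset\Omega$ together with the time restrictions $t\le\delta^2$ is that the support of the relevant test function, after being "spread out" by time $t\le\delta^2$, still fits comfortably inside $B_{l+2\delta}$, so that on that support the Dirichlet kernel of the ball is bounded below by a fixed multiple of the Gaussian kernel $P_N$. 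Concretely, one uses the standard fact that there is a constant $\kappa_N>0$ such that $P_{B_R}(x,y;t)\ge \kappa_N P_N(x,y;t)$ whenever $x,y$ and the "diffusion cloud" stay at distance $\gtrsim\sqrt t$ from $\partial B_R$; this is a parabolic boundary Harnack / reflection-type estimate, and for the ranges of $t$ in the statement the distances work out because $l+\sqrt t\le l+\delta$ and $\sqrt t\le\delta$ keep us inside $B_{l+2\delta}$ with a margin of order $\delta\gtrsim\sqrt t$.

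For item (i): writing $\chi_l=\chi_{B_l}$, for $x\in B_{l+\sqrt t}$ I would estimate
$$
S(t)\chi_l(x)\ge \kappa_N\int_{B_l}P_N(x,y;t)\,dy
=\kappa_N(4\pi t)^{-N/2}\int_{B_l}e^{-|x-y|^2/4t}\,dy.
$$
For such $x$ and $y\in B_l$ one has $|x-y|\le (l+\sqrt t)+l\le 2(l+\sqrt t)$, hence $e^{-|x-y|^2/4t}\ge e^{-(l+\sqrt t)^2/t}$. Thus $S(t)\chi_l(x)\ge \kappa_N(4\pi t)^{-N/2}|B_l|\,e^{-(l+\sqrt t)^2/t}$. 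Now split into the two regimes $l\le\sqrt t$ and $l\ge\sqrt t$: in the first, $t^{-N/2}\sim (l+\sqrt t)^{-N}$ and $|B_l|=\omega_N l^N$ gives the factor $l^N$, while the exponential is bounded below by $e^{-4}$; in the second, one instead picks up $|B_l|\,t^{-N/2}\gtrsim l^N t^{-N/2}\gtrsim l^N(l+\sqrt t)^{-N}$ and again the exponent $(l+\sqrt t)^2/t\le 4$ is controlled. (Alternatively, a cleaner single computation: for $y$ in the smaller ball $B_{l/2}$, say, $|x-y|\lesssim l+\sqrt t$ with a slightly better constant.) In all cases we obtain $S(t)\chi_l(x)\ge c_N\, l^N(l+\sqrt t)^{-N}$ for $x\in B_{l+\sqrt t}$, i.e. the claimed pointwise inequality $S(t)\chi_l\ge c_N l^N(l+\sqrt t)^{-N}\chi_{l+\sqrt t}$, valid for $0<t\le\delta^2$. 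This is essentially the argument of \cite{Laister1}, which I would cite for the constant.

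For item (ii): again reduce to $P_N$ up to the factor $\kappa_N$, so it suffices to bound $(4\pi t)^{-N/2}\int_{B_l}|y|^{-\gamma}e^{-|x-y|^2/4t}\,dy$ from below for $x\in B_{\sqrt t}$ and $0<t\le\min\{\delta^2,l^2\}$. Restrict the integral to the annulus $A_t:=\{\sqrt t\le|y|\le 2\sqrt t\}$, which is contained in $B_l$ precisely because $2\sqrt t\le 2l$—here one actually only needs $\sqrt t\le l$ up to adjusting constants, or restrict to $\{\,\tfrac12 l\le |y|\le l\,\}\cap\{|y|\le 2\sqrt t\}$; the condition $t\le l^2$ guarantees this annulus is nonempty and sits inside $B_l$. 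On $A_t$ we have $|y|^{-\gamma}\ge (2\sqrt t)^{-\gamma}=2^{-\gamma}t^{-\gamma/2}$, and for $x\in B_{\sqrt t}$, $|x-y|\le |x|+|y|\le 3\sqrt t$, so $e^{-|x-y|^2/4t}\ge e^{-9/4}$. Since $|A_t|=\omega_N((2\sqrt t)^N-(\sqrt t)^N)=c_N' t^{N/2}$, we get
$$
S(t)|\cdot|^{-\gamma}\chi_l(x)\ge \kappa_N(4\pi t)^{-N/2}\cdot 2^{-\gamma}t^{-\gamma/2}\cdot e^{-9/4}\cdot c_N' t^{N/2}
= c_N'' t^{-\gamma/2},
$$
for all $x\in B_{\sqrt t}$, which is the asserted bound $S(t)|\cdot|^{-\gamma}\chi_l\ge c_N' t^{-\gamma/2}\chi_{\sqrt t}$. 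The hypothesis $0<\gamma<N$ is exactly what makes $|y|^{-\gamma}$ locally integrable near the origin, so that the annular integral (or an integral over a small ball around $0$, if one prefers that variant) is finite and comparable to $t^{N/2-\gamma/2}$; the constant depends only on $N$ and $\gamma$ as claimed. This is the computation of \cite{Castillo2020}.

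The only genuinely non-elementary ingredient is the comparison $P_{B_R}(x,y;t)\ge\kappa_N P_N(x,y;t)$ on the relevant space-time region—i.e. controlling the loss coming from the Dirichlet boundary condition on the ball. I expect this to be the main obstacle to a fully self-contained write-up, but it is a standard fact (it follows, for instance, from the explicit lower Gaussian bounds for Dirichlet heat kernels on smooth bounded domains, valid for $t$ bounded and points away from the boundary by a fixed fraction of $\sqrt t$), and in any case items (i) and (ii) are quoted verbatim from \cite{Laister1} and \cite{Castillo2020} respectively, so it is legitimate simply to invoke those references for the precise constants $c_N,c_N'$. Everything else is the explicit Gaussian integral estimate sketched above, together with the domain-monotonicity (\ref{Kernel}) and careful bookkeeping of which balls contain which supports for the stated ranges of $t$.
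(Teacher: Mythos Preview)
The paper does not prove this lemma at all: it simply records that (i) is established in \cite{Laister1} and (ii) in \cite{Castillo2020}, so there is no ``paper's own proof'' to compare against. Your sketch is therefore more ambitious than what the paper does, and the overall strategy---reduce to $P_\Omega\ge P_{B_{l+2\delta}}\gtrsim P_N$ on the relevant region via domain monotonicity and a boundary-Harnack lower bound, then compute the Gaussian integral directly---is exactly the right one and is what those references do.

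That said, there is a real error in your treatment of (i) in the regime $l\ge\sqrt t$. You claim ``the exponent $(l+\sqrt t)^2/t\le 4$ is controlled,'' but this is false precisely when $l\ge\sqrt t$: for $l=10\sqrt t$ one gets $(l+\sqrt t)^2/t=121$, and the lower bound $e^{-121}$ is useless. Your ``alternative'' of integrating over $B_{l/2}$ does not help either, since $|x-y|$ is still of order $l$ for generic $x\in B_{l+\sqrt t}$. The correct argument in this regime is local: given $x\in B_{l+\sqrt t}$, choose $y_0\in\overline{B_l}$ with $|x-y_0|\le\sqrt t$ (take $y_0=x$ if $|x|\le l$, else $y_0=l\,x/|x|$), and integrate only over $B(y_0,\sqrt t/2)\cap B_l$, a set of volume $\gtrsim t^{N/2}$ on which $|x-y|\le\tfrac32\sqrt t$. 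This yields $S(t)\chi_l(x)\gtrsim 1$, which dominates $l^N(l+\sqrt t)^{-N}\le 1$ since $l\ge\sqrt t$. Your case $l\le\sqrt t$ is fine (there the exponential really is bounded below by $e^{-9/4}$).

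For (ii) your annulus $\{\sqrt t\le|y|\le 2\sqrt t\}$ is contained in $B_l$ only when $t\le l^2/4$, not $t\le l^2$; use $\{\sqrt t/2\le|y|\le\sqrt t\}$ instead and the argument goes through verbatim for the full range $0<t\le\min\{\delta^2,l^2\}$. The remaining ingredient---the two-sided Gaussian bound $P_{B_R}\ge\kappa_N P_N$ away from the boundary---is indeed standard, and your acknowledgment that it is the one non-elementary step is appropriate.
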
 

The notion of $L^r-$solution that we use in the work is the following.
\begin{defi} Let $u_0\in L^{r}\left ( \Omega \right )$ with $1\leq r< \infty $,  $g\in C(\mathbb{R})$  and $h\in C([0,\infty))$. A measurable function $u$  defined a.e in $\Omega \times \left ( 0,T \right )$, for some $T>0$, is called a local  $L^r-$solution of problem (\ref{Eqq2}) if
\begin{equation}\label{eq2}
   u\left ( t \right )=S\left ( t \right )u_0+\int_{0}^{t}S\left ( t-\sigma \right )h\left ( \sigma \right )g\left ( u\left ( \sigma \right ) \right )d\sigma=\mathfrak{F}\left ( u,u_0 \right )
\end{equation}
a.e. in $\Omega \times \left ( 0,T \right )$ and $u\in L^{\infty }\left ( \left ( 0,T \right ),L^{r}\left ( \Omega  \right ) \right )$.

When $u_0\geq 0$ and $g|_{[0,\infty)}\in C([0,\infty))$ we say that $u$ is a  non-negative $L^r-$solution if (\ref{eq2}) holds and $u\in L^{\infty }\left ( \left ( 0,T \right ),L^{r}\left ( \Omega  \right ) \right )$.
\label{Def}
\end{defi}

\begin{defi} Let $u_0\in L^{r}\left ( \Omega \right )$ with $1\leq r< \infty $,  $g\in C(\mathbb{R})$  and $h\in C([0,\infty))$. We say that a function $w$ defined a.e. in $\Omega \times \left ( 0,T \right )$ is a supersolution  of (\ref{Eqq2}) if $w\in L^{\infty }\left ( \left ( 0,T \right ),L^{r}\left ( \Omega  \right ) \right )$ and 
$\mathfrak{F}\left ( w,u_0 \right )\leq w$ a.e. in $\Omega \times \left ( 0,T \right )$, where  $\mathfrak{F}$ is defined by (\ref{eq2}).

Subsolutions of (\ref{Eqq2}) are defined similarly with reverse inequality.
\end{defi}

The following result is a version obtained in the proof of \cite[Theorem 2]{Laister2}.
\begin{lem}\label{Lem22}  Assume that $g\in C(\mathbb{R})$ is  non-decreasing, $h \in C([0,\infty)),$ and  $u_0\in L^{r}\left ( \Omega  \right )$ with $1\leq r<\infty$. If  $\overline{u}$ and $\underline{u}$ are a  supersolution and a subsolution of problem (\ref{Eqq2}) in $\Omega \times \left ( 0,T \right )$ respectively such that $\overline{u}\geq \underline{u}$, then  there exists a solution $u$  of problem (\ref{Eqq2}) defined on $\Omega \times \left ( 0,T \right )$ such that $\underline{u}\leq u\leq \overline{u}$.
\end{lem}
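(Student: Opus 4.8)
The plan is to prove this by the classical monotone iteration scheme associated with the sub- and supersolutions. Let $\mathfrak{F}$ be the Duhamel operator appearing in (\ref{eq2}). Define a sequence $(v_n)_{n\ge 1}$ by $v_1=\underline{u}$ and $v_{n+1}=\mathfrak{F}(v_n,u_0)$. The structural fact that drives the argument is that $v\mapsto \mathfrak{F}(v,u_0)$ is order preserving: the Dirichlet heat semigroup $\{S(t)\}$ is linear and positivity preserving, $g$ is non-decreasing, and $h\ge 0$ (as in every situation where the lemma is applied), so if $v\le w$ a.e. and both sides are defined then $\mathfrak{F}(v,u_0)\le\mathfrak{F}(w,u_0)$ a.e.

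First I would show by induction on $n$ that $v_n$ is well defined a.e., belongs to $L^{\infty}((0,T),L^{r}(\Omega))$, and that
$$\underline{u}=v_1\le v_2\le\cdots\le v_n\le v_{n+1}\le\cdots\le\overline{u}\qquad\text{a.e. in }\Omega\times(0,T).$$
The inequality $v_1\le v_2$ is exactly the subsolution inequality $\underline{u}\le\mathfrak{F}(\underline{u},u_0)$, and $v_1\le\overline{u}$ is a hypothesis. For the inductive step, $v_{n-1}\le v_n$ gives $v_n=\mathfrak{F}(v_{n-1},u_0)\le\mathfrak{F}(v_n,u_0)=v_{n+1}$ by monotonicity, while $v_n\le\overline{u}$ gives $v_{n+1}=\mathfrak{F}(v_n,u_0)\le\mathfrak{F}(\overline{u},u_0)\le\overline{u}$, the last step being the supersolution inequality. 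The sandwich $\underline{u}\le v_n\le\overline{u}$ also gives well-definedness and the $L^r$-bound: since $g$ is non-decreasing, $g(\underline{u})\le g(v_n)\le g(\overline{u})$, so the Duhamel integral of $g(v_n)$ is squeezed between those of $g(\underline{u})$ and $g(\overline{u})$, which are finite a.e. precisely because $\underline{u}$ is a subsolution and $\overline{u}$ a supersolution, and $\|v_n(t)\|_{L^r}\le\|\underline{u}(t)\|_{L^r}+\|\overline{u}(t)\|_{L^r}$.

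Consequently, for a.e. $(x,t)$ the sequence $v_n(x,t)$ is non-decreasing and bounded above by $\overline{u}(x,t)$, hence converges to a measurable function $u$ with $\underline{u}\le u\le\overline{u}$ a.e.; in particular $u\in L^{\infty}((0,T),L^{r}(\Omega))$. It remains to pass to the limit in $v_{n+1}(t)=S(t)u_0+\int_0^t S(t-\sigma)h(\sigma)g(v_n(\sigma))\,d\sigma$. The left-hand side tends to $u(t)$ a.e. by construction. On the right, continuity of $g$ gives $g(v_n(\sigma,x))\to g(u(\sigma,x))$ a.e., and since $v_n$ increases and $g$ is non-decreasing, $g(v_n)$ is a non-decreasing sequence; as $S(t-\sigma)$ and $h$ are non-negative, the monotone convergence theorem yields $\int_0^t S(t-\sigma)h(\sigma)g(v_n(\sigma))\,d\sigma\to\int_0^t S(t-\sigma)h(\sigma)g(u(\sigma))\,d\sigma$, the limit being finite a.e. because it is dominated by $\int_0^t S(t-\sigma)h(\sigma)g(\overline{u}(\sigma))\,d\sigma\le\overline{u}(t)-S(t)u_0$. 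Hence $u(t)=\mathfrak{F}(u,u_0)(t)$ a.e., so $u$ is an $L^r$-solution of (\ref{Eqq2}) on $\Omega\times(0,T)$ with $\underline{u}\le u\le\overline{u}$.

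The step requiring genuine care — rather than routine computation — is the interchange of limit and Duhamel integral together with the a.e.\ finiteness of every integral in play. One must decompose $g$ into positive and negative parts and use \emph{both} the sub- and the supersolution inequalities: from $\mathfrak{F}(\underline{u},u_0)\ge\underline{u}$ one gets $\int_0^t S(t-\sigma)h(\sigma)g(\underline{u}(\sigma))^{-}\,d\sigma<\infty$ a.e., and since $\underline{u}\le\overline{u}$ forces $g(\overline{u})^{-}\le g(\underline{u})^{-}$, this together with $\mathfrak{F}(\overline{u},u_0)\le\overline{u}$ gives $\int_0^t S(t-\sigma)h(\sigma)g(\overline{u}(\sigma))^{+}\,d\sigma<\infty$ a.e.; these two finiteness facts are what make the monotone/dominated convergence arguments legitimate and prevent any spurious $\pm\infty$. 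The only place where the sign of the weight enters is here, through $h\ge 0$, which is exactly what makes $\mathfrak{F}(\cdot,u_0)$ order preserving.
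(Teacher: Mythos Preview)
Your proof is correct and uses essentially the same monotone iteration scheme as the paper. The only difference is cosmetic: you iterate upward from $\underline{u}$ and pass to the limit via monotone convergence, whereas the paper iterates downward from $\overline{u}$ and invokes dominated convergence; both constructions produce a solution sandwiched between $\underline{u}$ and $\overline{u}$.
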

\begin{proof} From (\ref{eq2}),  $\overline{u} \geq \mathfrak{F}\left ( \overline{u},u_0 \right )$, since $\overline{u}$ is a supersolution of (\ref{Eqq2}). Moreover, $\mathfrak{F}$ is
non-decreasing on $u$, since $g$ is non-decreasing, $h\geq 0$ and the monotonicity property of the heat semigroup $\{S(t)\}_{t\geq 0}$. Consider the sequence $\left \{ u^{n} \right \}_{n\geq 0}$, given by $u^0 = \overline{u}$, and $\mathfrak{F}(u^{n-1},u_0) =u^{n}$ for $n\geq 1$. Since $\mathfrak{F}$ is non-decreasing and $\underline{u}$ is a subsolution with $\underline{u}\leq \overline{u}$ we have that the sequence $\left \{ u^n \right \}_{n\geq 0}$ is non-increasing a.e. in $\Omega \times \left ( 0,T \right )$ and $ \overline{u} \geq  u^n \geq u^{n+1} \geq \underline{u}$. Let $u\left ( x,t \right )=\lim_{n\rightarrow \infty }u^n\left ( x,t \right )$,
whenever it exists. The continuity of $g$, the monotonicity of semigroup $\{ S(t)\}_{t\geq 0}$, and the dominated convergence theorem allow us to conclude that $ u = \lim_{n\rightarrow \infty }u^{n} =  \lim_{n\rightarrow \infty }\mathfrak{F}\left (u^{n-1} ,u_0 \right )=\mathfrak{F}\left ( u,u_0 \right )$. In addition, since $|u|\leq \max\{|\overline{u}|, |\underline{u}|\}$ and  $\overline{u}, \underline{u} \in L^\infty ((0, T), L^r(\Omega))$ we conclude that $u \in L^\infty ((0, T), L^r(\Omega))$.

\end{proof}

Let $\Omega\subset \mathbb{R}^N$ be a smooth domain (possibly unbounded). We recall the well-known smoothing effect of the heat semigroup on Lebesgue spaces, that is,
\begin{equation}\label{reg}
    \left \| S\left ( t \right )\psi  \right \|_{L^{q_2}}\leq \left ( 4\pi t \right )^{-\frac{N}{2}\left ( \frac{1}{q_1}-\frac{1}{q_2} \right )}\left \| \psi  \right \|_{L^{q_1}},
\end{equation}
for $1\leq q_1\leq q_2\leq \infty$, $t>0$ and $\psi \in L^{q_1}\left ( \Omega  \right )$ (see \cite{Brezis} Lemma 7]. We also use the following estimate which can be obtained of estimate (\ref{Kernel}) and \cite[Proposition 2.1]{Slimene}.
\begin{lem}\label{Lem2}
Let $\gamma \in \left ( 0,N \right )$, and let $q_1,q_2\in (1,\infty ]$ satisfy
$$0\leq \frac{1}{q_2}< \frac{\gamma }{N}+\frac{1}{q_1}<1.$$
There exists a constant $C>0$, depending on $N,\gamma,q_1$ and $q_2$ such that
$$
  \left \| S\left ( t \right )\left ( \left | \cdot  \right |^{-\gamma }\psi  \right ) \right \|_{L^{q_2}}\leq Ct^{-\frac{N}{2}\left ( \frac{1}{q_1}-\frac{1}{q_2} \right )-\frac{\gamma }{2}}\left \| \psi  \right \|_{L^{q_1}},
$$
for all $t>0$ and all $\psi \in L^{q_1}\left ( \Omega  \right )$.
\end{lem}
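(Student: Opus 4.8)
The plan is to reduce the estimate, by two elementary steps, to a pure convolution inequality for the Gaussian semigroup on $\R^N$, which is then handled by Young's and H\"older's inequalities. First I would pass from the domain $\Omega$ to the whole space: by the kernel comparison (\ref{Kernel}), for a.e.\ $x$ one has $|S(t)(|\cdot|^{-\gamma}\psi)(x)|\le\int_{\R^N}(4\pi t)^{-N/2}e^{-|x-y|^2/4t}|y|^{-\gamma}\widetilde\psi(y)\,dy$, where $\widetilde\psi$ is $|\psi|$ extended by $0$ outside $\Omega$, with $\|\widetilde\psi\|_{L^{q_1}(\R^N)}=\|\psi\|_{L^{q_1}(\Omega)}$. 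Hence it suffices to prove the inequality for $\Omega=\R^N$, $\psi\ge0$, with $S(t)\phi=G_t*\phi$ and $G_t(x)=(4\pi t)^{-N/2}e^{-|x|^2/4t}$.

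Next I would scale away the time variable. With $w=G_t*(|\cdot|^{-\gamma}\psi)$, the substitution $y=\sqrt t\,z$ together with $G_t(\sqrt t\,v)=t^{-N/2}G_1(v)$ gives $w(\sqrt t\,\xi)=t^{-\gamma/2}\bigl(G_1*(|\cdot|^{-\gamma}\psi(\sqrt t\,\cdot))\bigr)(\xi)$; using $\|\psi(\sqrt t\,\cdot)\|_{L^{q_1}}=t^{-N/(2q_1)}\|\psi\|_{L^{q_1}}$ and $\|w\|_{L^{q_2}}=t^{N/(2q_2)}\|w(\sqrt t\,\cdot)\|_{L^{q_2}}$, one checks that the desired bound for arbitrary $t>0$, with the exponent $-\frac N2(\frac1{q_1}-\frac1{q_2})-\frac\gamma2$, follows from the single inequality $\|G_1*(|\cdot|^{-\gamma}\psi)\|_{L^{q_2}}\le C\|\psi\|_{L^{q_1}}$.

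For this last inequality I would split $|x|^{-\gamma}=|x|^{-\gamma}\chi_{B_1}(x)+|x|^{-\gamma}\chi_{B_1^{c}}(x)$: the near part lies in $L^{s}(\R^N)$ exactly for $1\le s<N/\gamma$, the far part exactly for $N/\gamma<s\le\infty$, and $G_1\in L^a(\R^N)$ for every $a\in[1,\infty]$. To each of the two terms I would apply Young's convolution inequality $\|G_1*\phi\|_{L^{q_2}}\le\|G_1\|_{L^a}\|\phi\|_{L^b}$ with $1+\frac1{q_2}=\frac1a+\frac1b$, followed by H\"older, $\||\cdot|^{-\gamma}\chi\,\psi\|_{L^b}\le\||\cdot|^{-\gamma}\chi\|_{L^s}\|\psi\|_{L^{q_1}}$ with $\frac1b=\frac1s+\frac1{q_1}$. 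A direct check shows that admissible exponents $a,b\in[1,\infty]$ and $s$ in the required range can be chosen for each term exactly under the stated hypotheses: the near part uses $\frac1{q_1}+\frac\gamma N<1$ (so that $q_1'<N/\gamma$), and the far part uses $\frac1{q_2}<\frac1{q_1}+\frac\gamma N$ (so there is room between $\frac1{q_2}$ and $\frac1{q_1}+\frac\gamma N$); adding the two contributions gives the claim, and the constant depends only on $N,\gamma,q_1,q_2$ through the chosen exponents.

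There is no conceptual obstacle here; the only technical point is the exponent bookkeeping in the last step — in particular covering the cases $q_2<q_1$ and the endpoints $q_1=\infty$ or $q_2=\infty$, and verifying that the powers of $t$ produced by the scaling combine to exactly $-\frac N2(\frac1{q_1}-\frac1{q_2})-\frac\gamma2$. Alternatively, once the reduction to $\Omega=\R^N$ has been made, one may simply invoke \cite[Proposition 2.1]{Slimene}, which is precisely this estimate for the heat semigroup on $\R^N$.
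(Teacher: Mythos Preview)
Your proposal is correct and follows essentially the same route as the paper: reduce from $\Omega$ to $\R^N$ via the kernel comparison (\ref{Kernel}), then invoke \cite[Proposition~2.1]{Slimene}. The paper gives no further detail, whereas you additionally supply a self-contained proof of the $\R^N$ case by scaling to $t=1$ and splitting $|x|^{-\gamma}$ into its near and far parts; your exponent bookkeeping checks out under the stated hypotheses, including the endpoint cases $q_1=\infty$ and $q_2=\infty$.
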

The following proposition is used in the proof of our nonexistence results. Its proof follows the ideas of \cite{Weissler1} and \cite{Weissler2} (see also Lemma 15.6 in \cite{Souplet}).

\begin{prop}\label{Prop1} Assume $h \in C([0,\infty))$, and $f\in C([0,\infty))$  non-decreasing,    convex, $f(0)=0$ and $f(s)>0$ for all $s\geq z_0$, for some $z_0\geq 0$, such that
\begin{equation}\label{Osgood}
\int_{z_0}^\infty \frac{d \sigma}{f(\sigma)} < \infty.
\end{equation}
Let $u_0:\Omega \to [0,\infty)$ and $u: \Omega \times (0,T) \rightarrow [0,\infty) $  measurable functions such that
\begin{equation}\label{Eq2}
u(t) \geq S(t)u_0 + \int_0^t S(t-\sigma)h(\sigma) f(u(\sigma)) d\sigma \  \ \ \ \ \mbox{ a.e. in } \Omega \times (0,T).
\end{equation}
Assume that $u(x, t) < \infty $  a.e. in $  \Omega \times (0,T)$ and $u_0\neq 0$. Then
\begin{equation}\label{Eq3}
\int_0^\tau h(\sigma) d\sigma \cdot \left(\int_{\|S(\tau) u_0 \|_{L^\infty}}^\infty \frac{d \sigma}{f(\sigma)}\right)^{-1} \leq 1
\end{equation}
for all $\tau \in (0,T).$
\end{prop}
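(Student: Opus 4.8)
The plan is to derive an ordinary differential inequality for a suitable spatial sup-norm of $u$ and then integrate it using the Osgood-type condition (\ref{Osgood}). First I would fix $\tau\in(0,T)$ and, for $t\in(0,\tau)$, apply the semigroup $S(\tau-t)$ to both sides of (\ref{Eq2}). Using the semigroup property $S(\tau-t)S(t)=S(\tau)$, the positivity and order-preserving nature of $S(\cdot)$, and the convexity of $f$ together with $f(0)=0$ (which yields Jensen's inequality $f(S(s)v)\le S(s)f(v)$ for $s>0$ and $v\ge 0$, since $S(s)$ is an averaging operator with total mass $\le 1$ and $f$ is non-decreasing), one gets
\begin{equation*}
S(\tau-t)u(t)\ \ge\ S(\tau)u_0+\int_0^t S(\tau-\sigma)h(\sigma)f(u(\sigma))\,d\sigma\ \ge\ S(\tau)u_0+\int_0^t h(\sigma) f\bigl(S(\tau-\sigma)u(\sigma)\bigr)\,d\sigma .
\end{equation*}
Setting $\Phi(t):=\|S(\tau-t)u(t)\|_{L^\infty}$ — or, to be safe about measurability, working with $\Phi(t):=\operatorname{ess\,sup}_{x\in\Omega} S(\tau-t)u(t)(x)$ — and using the contraction property $\|S(\tau-\sigma)f(u(\sigma))\|_{L^\infty}\le\|f(u(\sigma))\|_{L^\infty}$ is not quite what we want; instead we keep $f$ inside and take the essential supremum of the right-hand side, noting that $x\mapsto S(\tau-\sigma)u(\sigma)(x)$ is being fed into the non-decreasing $f$. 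This gives the scalar integral inequality $\Phi(t)\ge \Phi(0)+\int_0^t h(\sigma)f(\Phi(\sigma))\,d\sigma$ with $\Phi(0)=\|S(\tau)u_0\|_{L^\infty}$.

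Next I would turn this integral inequality into the desired bound. Let $y(t):=\Phi(0)+\int_0^t h(\sigma)f(\Phi(\sigma))\,d\sigma$, so $y$ is absolutely continuous, $y(t)\le\Phi(t)$, $y'(t)=h(t)f(\Phi(t))\ge h(t)f(y(t))$ a.e. (using $f$ non-decreasing and $h\ge 0$; note $h\ge 0$ must be justified, but it is forced because $\int_0^t S(t-\sigma)h(\sigma)f(u(\sigma))\,d\sigma\ge 0$ with $f(u)\ge 0$ — actually this requires a short argument, or one simply restricts to the a.e. set where the integrand contributions are nonnegative; more cleanly, Proposition statements of this type tacitly have $h\ge0$, and indeed in the applications $h(t)=t^\beta$). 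Dividing by $f(y(t))$ (which is positive once $y(t)\ge z_0$, and $y(t)\ge\Phi(0)$, while the finiteness of $u$ a.e. plus $u_0\neq 0$ forces $\Phi(0)=\|S(\tau)u_0\|_\infty>0$; if $\Phi(0)<z_0$ one uses that $y$ is non-decreasing and the integral $\int_{z_0}^\infty d\sigma/f(\sigma)$ dominates $\int_{\Phi(0)}^\infty d\sigma/f(\sigma)$ up to a finite piece — here one needs $f(\sigma)>0$ for $\sigma$ in the relevant range, which holds for $\sigma\ge z_0$; to avoid this nuisance one can simply assume WLOG $\Phi(0)\ge z_0$ after noting $f>0$ on $[z_0,\infty)$ and that the claimed inequality is vacuous or follows trivially otherwise). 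Then integrating $y'(t)/f(y(t))\ge h(t)$ over $(0,\tau)$ yields
\begin{equation*}
\int_0^\tau h(\sigma)\,d\sigma\ \le\ \int_{y(0)}^{y(\tau)}\frac{d\sigma}{f(\sigma)}\ \le\ \int_{\Phi(0)}^{\infty}\frac{d\sigma}{f(\sigma)}\ =\ \int_{\|S(\tau)u_0\|_{L^\infty}}^{\infty}\frac{d\sigma}{f(\sigma)},
\end{equation*}
which is exactly (\ref{Eq3}) after rearranging, provided the right-hand integral is finite — and it is, by (\ref{Osgood}).

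I expect the main obstacle to be the rigorous handling of the sup-norm step: passing from the pointwise (in $x$) inequality for $S(\tau-t)u(t)$ to a clean scalar inequality for $\Phi(t)$ requires that $f\bigl(S(\tau-\sigma)u(\sigma)(x)\bigr)\le f(\Phi(\sigma))$ for a.e. $x$, which is immediate from monotonicity of $f$, but one must also ensure $\Phi$ is measurable in $t$ and that one may exchange $\operatorname{ess\,sup}_x$ with the time integral (Fatou/Tonelli suffices for the inequality direction we need, since we only need a lower bound on the left by an integral of a nonnegative integrand). A secondary subtlety is the application of Jensen's inequality to the possibly sub-Markovian (mass $<1$) kernels $P_\Omega$: since $f$ is convex, non-decreasing and $f(0)=0$, one has $f(\theta a)\le\theta f(a)$ for $\theta\in[0,1]$, so $f\bigl(\int P_\Omega u\bigr)\le f\bigl(\int P_{\mathbb R^N} u\bigr)\le\int P_{\mathbb R^N}f(u)$ only if the total mass is exactly $1$; the correct route is $f\bigl(\int P_\Omega(x,y;s)u(y)\,dy\bigr)\le \int P_\Omega(x,y;s)f(u(y))\,dy$ directly from convexity with $f(0)=0$ by comparing with the normalized measure and using $f(0)=0$ to absorb the defect of mass — this is standard but deserves an explicit line. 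Everything else is routine ODE comparison.
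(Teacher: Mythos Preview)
Your overall strategy---apply $S(\tau-t)$ to the integral inequality, use Jensen's inequality for the sub-Markovian semigroup (via convexity, monotonicity, and $f(0)=0$), then reduce to an Osgood-type ODE comparison---is exactly the route the paper takes. The gap is in the passage to the scalar inequality for $\Phi(t)=\|S(\tau-t)u(t)\|_{L^\infty}$. From the pointwise bound
\[
S(\tau-t)u(t)(x)\ \ge\ S(\tau)u_0(x)+\int_0^t h(\sigma)\,f\bigl(S(\tau-\sigma)u(\sigma)(x)\bigr)\,d\sigma,
\]
taking the essential supremum in $x$ yields only $\Phi(t)\ge \sup_x[\,\cdot\,]$. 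The inequality $f\bigl(S(\tau-\sigma)u(\sigma)(x)\bigr)\le f(\Phi(\sigma))$ that you invoke goes the \emph{wrong way}: it shows that each $x$-slice of the right-hand side is \emph{at most} $\Phi(0)+\int_0^t h(\sigma)f(\Phi(\sigma))\,d\sigma$, so the supremum is too. You therefore cannot conclude $\Phi(t)\ge \Phi(0)+\int_0^t h(\sigma)f(\Phi(\sigma))\,d\sigma$; the sup of a sum is bounded \emph{above}, not below, by the sum of the sups. With that inequality gone, your definition of $y$ no longer satisfies $y\le\Phi$, and the ODE comparison $y'\ge h\,f(y)$ breaks down.

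The remedy is simply to postpone the supremum, which is precisely what the paper does. Fix $x\in\Omega$ and set
\[
\beta(t):=S(\tau)u_0(x)+\int_0^t h(\sigma)\,f\bigl(S(\tau-\sigma)u(\sigma)(x)\bigr)\,d\sigma.
\]
Then $\beta$ is absolutely continuous, $\beta(t)\le S(\tau-t)u(t)(x)$ by the displayed inequality, and hence
\[
\beta'(t)=h(t)\,f\bigl(S(\tau-t)u(t)(x)\bigr)\ge h(t)\,f(\beta(t))\quad\text{a.e.}
\]
Integrating against $1/f$ on $(0,\tau)$ gives $\int_0^\tau h(\sigma)\,d\sigma\le \int_{\beta(0)}^{\infty} d\sigma/f(\sigma)=\int_{S(\tau)u_0(x)}^{\infty} d\sigma/f(\sigma)$. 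Since this holds for every $x$, take the infimum of the right-hand side over $x$ (equivalently, replace $S(\tau)u_0(x)$ by $\|S(\tau)u_0\|_{L^\infty}$) to obtain (\ref{Eq3}). Your discussion of Jensen for sub-Markov kernels and of the Osgood integration is fine; only the location of the $\sup_x$ needs to move.
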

\begin{proof} Since $f$ is convex, $f(0)=0$,  $0<\eta=\int_\Omega P_\Omega(x,y;t) dy \leq 1$ (see (\ref{Kernel})) using (\ref{Sem}) and Jensen's inequality we have
$$
\begin{array}{ll}
f (S(t)u)&=f\left (\int_\Omega p_\Omega(x,y;t)u(y)dy \right )\\
&=f\left (\eta \int_\Omega \frac{p_\Omega(x,y;t)}{\eta}u(y)dy \right ) \\
&\leq \eta f \left  (\int_\Omega \frac{p_\Omega(x,y;t)}{\eta}u(y)dy \right )\\
& \leq \int_{\Omega} p_\Omega(x,y;t) f(u(y))dy=S(t)f(u)
\end{array}
$$ for all $t>0$. From $(\ref{Eq2})$ and the semigroup properties we have
\begin{equation}\label{Non12}
S(s-t)u(t) \geq  \Theta(\cdot,t),
\end{equation}
for $t \in [0,s]$ with $s\in (0,T)$, where
$$\Theta(\cdot,t):= S(s)u_0 + \displaystyle\int_{0}^{t} h(\sigma) f(S(s - \sigma)u(\sigma))~d\sigma \leq u(\cdot,s) < \infty,$$
for all $(x,t) \in \Omega \times [0,s].$

Note that for $x \in \Omega$ fixed the function $\beta(t) := \Theta(x, t)$ is absolutely continuous on $[0,s] $, consequently it is differentiable a.e. in $[0,s]$  and from (\ref{Non12}) we have
\begin{equation}\label{Non22}
 \beta'(t) =h(t) f(S(s - t)u(x,t)) \geq h(t)f(\beta(t)) \ \ \ \ \mbox{ a.e. } t \in [0,s], \end{equation}
since that $f$ is non-decreasing.

For $z\geq z_0$, let $H\left ( z \right )=\int_{z}^{\infty }\frac{d\sigma }{f\left ( \sigma  \right )}$. From (\ref{Osgood}) and (\ref{Non22}), $ H'\left ( \beta \left ( t \right ) \right )=-\frac{\beta '\left ( t \right )}{f\left ( \beta \left ( t \right ) \right )}\leq -h\left ( t \right ).$
Integrating, we obtain $H\left ( \beta \left ( s \right ) \right )-H\left ( \beta \left ( 0 \right ) \right )\leq -\int_{0}^{s}h\left ( \mu  \right )d\mu$, that is,
\begin{eqnarray*}
	\int_{0}^{s}h\left ( \mu  \right )d\mu&\leq& \int_{\beta \left ( 0 \right )}^{+\infty }\frac{d\sigma }{f\left ( \sigma  \right )}-\int_{\beta \left ( s \right )}^{+\infty }\frac{d\sigma }{f\left ( \sigma  \right )}\leq\int_{\beta \left ( 0 \right )}^{+\infty }\frac{d\sigma }{f\left ( \sigma  \right )}.
\end{eqnarray*}
Thus
$$
\int_{ S(s)u_0 }^{\infty} \frac{d \sigma}{f(\sigma)} \geq  \int_{ \beta(0) }^{\infty} \frac{d \sigma}{f(\sigma)} \geq   \int_0^s h(\sigma) d\sigma
$$
and the results follows.
\end{proof}

\section{Proof of the results}
\textbf{Proof of Theorem \ref{Main1}-(i)} Since $f$ is convex and $f(0)=0$  the function  $t \in (0,\infty) \mapsto f(t)/t$ is non-decreasing. Thus,  the function $G:(0,\infty )\rightarrow [0,\infty )$ given by $G\left ( t \right )=f(t)/t$ is well defined.

Let $\epsilon>0$ and $p_0>p^\star$ such that $\limsup_{t\to \infty} t^{-p_o} f(t)<\infty$ and $p_0<p_{\inf}+\epsilon.$ There exists $t_0>1$ so that $f(t)\leq \eta t^{p_0}$ for some $\eta>0$ and all $t\geq t_0.$ Thus,
\begin{equation}\label{INT1}
	  \int_{1}^{\infty }\sigma^{-\left ( 1+\frac{2r}{\rho } \right )}G\left ( \sigma \right )d\sigma=C+\eta \int_{t_0}^ \infty \sigma^{-2 -\frac{2r}{\rho }+ p_0}d\sigma< \infty,
\end{equation}
since $p_0< p_{\inf}+\epsilon <1+2r/\rho$ if $\epsilon>0$ is sufficiently small.

Let $w\left ( t \right )=A S\left ( t \right )u_0$ with $A>1$, and $u_0\in \mathcal{I}^{\rho,r}$, that is, there is a constant $K>0$ such that $0\neq u_0\leq K^{1/r} \left | \cdot  \right |^{-\rho /r} \chi_{B_a}$. From Lemma \ref{Lem2} we have $\|S(t)u_0\|_{L^\infty} \leq C_0 K^{1/r}t^{-\rho/2r} $. Thus,
\begin{equation}\label{Blow6}
\begin{array}{lcll}
\int_0^t S\left ( t-\sigma \right )f\left ( w\left ( \sigma \right ) \right )d \sigma&=&\int_0^t S\left ( t-\sigma \right )f\left ( AS\left ( \sigma \right )u_0 \right ) d\sigma&\\
&=&\int_0^tS\left ( t-\sigma \right ) G\left ( AS\left ( \sigma \right )u_0 \right )AS\left ( \sigma \right )u_0d\sigma &\\
&\leq&  \int_0^t S\left ( t-\sigma \right ) G\left ( A C_0 K^{1/r}   \sigma^{-\rho/2r} \right )AS\left ( \sigma \right )u_0 d\sigma&\\
&\leq& AS\left ( t \right )u_0 \int_0^tG\left (AC_0K^{1/r} \sigma^{-\rho /2r} \right )d\sigma&\\
&=& w(t) (A C_0)^{2r/\rho} K^{2/\rho} \left (\frac{2r}{\rho}\right) \int_{t^{-\rho/2r} AC_0 K^{1/r}}^\infty \sigma^{-(1+\frac{2r}{\rho}) }G(\sigma) d\sigma.
\end{array}
\end{equation}
Then, from (\ref{INT1}) and (\ref{Blow6}) we have
\begin{eqnarray*}
\mathfrak{F}\left ( w,u_0 \right )&=&S\left ( t \right )u_0+\int_{0}^{t}S\left ( t-\sigma \right )f\left ( w\left ( \sigma \right ) \right )d\sigma\\
&\leq& S\left ( t \right )u_0+ w(t) (AC_0)^{2r/\rho} K^{2/\rho} \left (\frac{2r}{\rho}\right) \int_{t^{-\rho/2r} AC_0K^{1/r}}^\infty \sigma^{-(1+\frac{2r}{\rho}) }G(\sigma) d\sigma\\
&=& \left [ \frac{1}{A} + (AC_0)^{2r/\rho} K^{2/\rho} \left (\frac{2r}{\rho}\right ) \int_{t^{-\rho/2r} AC_0K^{1/r}}^\infty \sigma^{-(1+2r/\rho) }G(\sigma) d\sigma \right ] w\left ( t \right )\\
&\leq &w\left ( t \right ),
\end{eqnarray*}
for $t\in (0,T)$ with $T>0$ sufficiently small. Thus, $\mathfrak{F}\left ( w,u_0 \right )\leq w$ in $( 0,T)$ and $w$ is a supersolution of (\ref{Eq1}) in $(0,T)$.  

Since $f(0)=0$ we see that $v=0$ is a subsolution of (\ref{Eq1}). From Lemma \ref{Lem22}, with $h=1$, $g\in C(\mathbb{R})$ defined by $g(t)=0, t\leq 0$ and $g(t)=f(t)$ for $t>0$, we conclude that there exists a $L^r-$solution of problem (\ref{Eq1}) such that  $0=v\leq u\leq w$ in $(0,T)$. In addition,
$\left \| u(t) \right \|_{L^{\infty}}\leq \left \| w(t) \right \|_{L^{\infty}}\leq C t^{-\rho/2r}$
for $t\in(0,T)$ for some constant $C>0$.

\textbf{Proof of Theorem \ref{Main1}-(ii)} We divide the proof in two cases:

\textbf{Case 1. } $p_{\sup}<\infty$. Let $u_0 \in \mathcal{I}_{\rho,r}$, that is, $u_0\geq K^{1/r} |\cdot|^{-\rho/r} \chi_{B_a}=v_0$, where $B_{a} \subset \Omega$. Note that $v_0 \in L^r(\Omega)$ since $0<\rho<N$. 

We argue by contradiction and  suppose that there exists a local non-negative $L^r$-solution $u$ of (\ref{Eq1}) with initial condition $u_0 \in \mathcal{I}_{\rho,r}$, that is,
$$ u(t) = S(t)u_0 + \int_0^t S(t-\sigma) f(u(\sigma)) d\sigma,$$
for $t\in (0,T)$, $u \in L^\infty((0,T), L^r(\Omega))$ for some $T>0$. Hence 
\begin{equation}\label{Fi.uno}
u(t) \geq  S(t)v_0 + \int_0^t S(t-\sigma) f(u(\sigma)) d\sigma
\end{equation}
for $t\in (0,T)$.

 Note that Lemma \ref{Lem2} implies  $S(t) v_0 \in L^\infty(\Omega)$ for all $t>0$. Moreover,  by Lemma \ref{Lem1}, we have $S(t) v_0 \geq c_N' t^{-\frac{\rho}{2r}} \chi_{\sqrt{t}}$,
for all $0<t<\min\{(a/3)^2,T\}$. Thus
\begin{equation}\label{Blow2}
\|S(t) v_0\|_{L^\infty} \geq c_N' t^{-\rho/2r},
\end{equation}
for all $0<t<\min\{(a/3)^2,T\}.$

Let $\epsilon$ be sufficiently small such that $0<\epsilon<p_{\sup} - 1-2r/\rho$. There exists $p_0 >p^\star$ such that $\liminf_{t\rightarrow \infty} t^{-p_0} f(t)>0$ and $p_{\sup}<p_0+\epsilon$. Thus, there exists a constant $\eta>0$ such that $f(t) \geq \eta t^{p_0}$ for $t\geq t_0\geq 1$ for some constant $t_0$. From (\ref{Blow2}), we see that there exists $\tau_0$ such that $\|S(\tau)v_0\|_{L^\infty} \geq t_0$ for all $0<\tau\leq \tau_0$. Using again (\ref{Blow2})
$$
\begin{array}{ll}
\int_{\|S(\tau)v_0\|_{L^\infty}}^\infty \frac{d\sigma}{f(\sigma)}& \leq \frac{1}{\eta}\int_{\|S(\tau)v_0\|_{L^\infty}}^\infty \frac{d\sigma}{ \sigma^{p_0}} \\
&= \frac{1}{\eta(p_0-1)}\|S(\tau)v_0\|_{L^\infty}^{1-p_0}\\
&\leq \frac{1}{\eta(p_0-1)}\|S(\tau)v_0\|_{L^\infty}^{1-p_{\sup}+\epsilon}\\
&\leq \frac{(c_N')^{1-p_{\sup} +\epsilon}}{\eta(p_0-1)} \tau^{-\rho (1-p_{\sup} +\epsilon)/2r}
\end{array}
$$
Thus, from (\ref{Fi.uno}) and Proposition \ref{Prop1}
\begin{equation} \label{Blow4}
1\geq \tau \left(\int_{\|S(\tau) v_0\|_{L^\infty}}^{\infty}  \frac{d\sigma}{f(\sigma)}\right)^{-1} \geq \eta(p_0-1) (c_N')^{-(1-p_{\sup} +\epsilon)} \tau^{1+ \rho (1-p_{\sup} +\epsilon)/2r}
\end{equation}
for all $0<\tau\leq\min\{\tau_0, (a/3)^2, T\}$. This is a contradiction for $\tau$  sufficiently  small, since  $1+\rho(1-p_{\sup}+\epsilon)/2r<0$.

\medskip
\textbf{Case 2. } $p_{\sup}=\infty$. Let $u_0 \in \mathcal{I}_{\rho,r}$ with $\frac{2r}{p_{\sup }- 1}=0 < \rho < N$ and let $p_0>p^\star$ such that $2r/(p_0-1) < \rho < N$ such that $\liminf_{t \rightarrow \infty} t^{-p_0} f(t) >0.$ Arguing as the previous case, considering $p_0$ instead of $p_{\sup}$, the result follows.


\medskip
\textbf{Proof of Theorem \ref{Main2}} (i) We adapt the proof of Theorem \ref{Main1}-(i)  to obtain a solution for initial data with indefinite sign. To do this, we  show that  for $|u_0|\in \mathcal{I}^{\rho,r}$ and $A>1$, the functions $w(t)=A S(t)|u_0|$ and $v(t)=-AS(t)|u_0|$ are, respectively, a supersolution and a subsolution of (\ref{Eqq2}).

Note that taking $t=s$ in (\ref{h1}), we have
 \begin{equation}\label{h2}
    g(s)\leq sG(s), \quad s\geq 0,
 \end{equation}
 and taking $t=-s$,
 \begin{equation}\label{h3}
      g(s)\geq sG(-s), \quad s\leq 0.
 \end{equation}
Arguing similarly as in the derivation of (\ref{Blow6}),  using the estimate $\|S(\sigma)|u_0|\|_{L^\infty}\leq C_0K^{1/r}\sigma^{-\rho/2r}$ and inequality (\ref{h2})
\begin{equation}\label{Blow62}
\begin{array}{lcll}
\int_0^t  S\left ( t-\sigma \right) h(\sigma)g\left ( w\left ( \sigma \right ) \right )d\sigma&=&\int_0^t h(\sigma) S\left ( t-\sigma \right )g\left ( AS\left ( \sigma \right )|u_0| \right ) d\sigma&\\
&\leq &\int_0^t h(\sigma)S\left ( t-\sigma \right ) G\left ( AS\left ( \sigma \right )|u_0| \right )AS\left ( \sigma \right )|u_0|d\sigma&\\
&\leq& AS\left ( t \right )|u_0| \int_0^th(\sigma)G\left (A C_0 K^{1/r} \sigma^{-\rho/2r} \right )d\sigma&\\
&=& w(t) \int_0^th(\sigma)G\left (AC_0K^{1/r} \sigma^{-\rho/2r} \right )d\sigma.
\end{array}
\end{equation}
Hence, by  inequality (\ref{h4})
$$
\begin{array}{lll}
\mathfrak{F}\left ( w,u_0 \right )&=&S\left ( t \right )u_0+\int_{0}^{t}S\left ( t-\sigma \right )h(\sigma)g\left ( w\left ( \sigma \right ) \right )d\sigma\\
&\leq& S\left ( t \right )|u_0|+w(t) \int_0^th(\sigma)G\left (AK^{1/r} \sigma^{-\rho/2r} \right )d\sigma\\
&\leq& \left ( \frac{1}{A} + \int_0^th(\sigma)G\left (AC_0K^{1/r} \sigma^{-\rho/2r} \right )d\sigma \right )w\left ( t \right )\\
&\leq &w\left ( t \right ),
\end{array}
$$
for $t\in (0,T)$ with $T>0$ sufficiently small. Thus, $\mathfrak{F}\left ( w,u_0 \right )\leq w$ in $( 0,T)$ and $w$ is a supersolution of (\ref{Eqq2}) in $(0,T)$. 

 Now, using  (\ref{h3}) and arguing similarly as in the derivation of (\ref{Blow62}) we have
$$
\begin{array}{lcll}
\int_0^t S\left ( t-\sigma \right )h(\sigma)g\left ( v\left ( \sigma \right ) \right )d\sigma&=&\int_0^t S\left ( t-\sigma \right )h(\sigma)g\left ( -AS\left ( \sigma \right )|u_0| \right ) d\sigma&\\
&\geq&\int_0^t S\left ( t-\sigma \right )h(\sigma)G( AC_0K^{1/r}s^{-\rho/2r})[-AS\left ( \sigma \right )|u_0|] d\sigma&\\
&\geq& -AS\left ( t \right )|u_0| \int_0^th(\sigma)G\left (AC_0K^{1/r} \sigma^{-\rho/2r} \right )d\sigma&\\
&=& v(t)\int_0^th(\sigma)G\left (AC_0K^{1/r} \sigma^{-\rho/2r} \right)d\sigma
\end{array}
$$
and so
\begin{eqnarray*}
\mathfrak{F}\left ( v,u_0 \right )&=&S\left ( t \right )u_0+\int_{0}^{t}S\left ( t-\sigma \right )h(\sigma)g\left ( v\left ( \sigma \right ) \right )d\sigma\\
&\geq& -S\left ( t \right )|u_0|+v(t) \int_0^th(\sigma)G\left (AC_0K^{1/r} \sigma^{-\rho/2r} \right )d\sigma\\
&\geq& \left ( \frac{1}{A} + \int_0^th(\sigma)G\left (AC_0 K^{1/r} \sigma^{-\rho /2r} \right )d\sigma \right )v\left ( t \right )\\
&\geq &v\left ( t \right ),
\end{eqnarray*}
for $t\in (0,T)$ with $T>0$ sufficiently small. Thus, $\mathfrak{F}\left ( v,u_0 \right )\geq v$ in $( 0,T )$ and $v$ is a subsolution of (\ref{Eqq2}) in $(0,T)$.  

Therefore, Lemma \ref{Lem22} assures that problem (\ref{Eqq2}) admits a $L^r-$solution defined on $(0,T)$ and  $v(t)\leq u(t)\leq w(t)$ for $t\in (0,T)$. Hence, $|u(t)|\leq AS(t)|u_0|$  for $t\in (0,T)$, and 
$$\left \| u(t) \right \|_{L^{\infty}}\leq A \left  \|S(t)|u_0| \right \|_{L^{\infty}}\leq Ct^{-\rho/2r}$$
 for some constant $C>0$.

(ii) Let $u_0 \in \mathcal{I}_{\rho,r}$ with $ 0 < \rho < N$, that is, $u_0 \geq K |x|^{-\rho/r} \chi_a=v_0$ and $f=g|_{[0,\infty)}$. Then $v_0\in L^r(\Omega)$.

We arguing by contradiction and assume that there exists a local non-negative $L^r-$solution $u$ of (\ref{Eqq2}), defined on some interval $(0,T)$, with initial condition $u_0$. Since $u_0\geq v_0$ we obtain
$$ u(t) \geq S(t) v_0 + \int_0^t h(\sigma)S(t-\sigma) f(u(\sigma)) d\sigma, $$
for $t\in (0,T).$

Arguing as in the proof of Theorem \ref{Main1}(ii) we have
\begin{equation}\label{BLow2}
\|S(t) v_0\|_{L^\infty} \geq c_N' t^{-\rho/2r}
\end{equation}
for all $0<t<\min\{(a/3)^2,T\}$, see (\ref{Blow2}).

Since $p_{\sup}<\infty$ there exists $p_0>p_{\sup}-\epsilon$ and $\liminf_{t\to \infty} t^{-p_0}f(t)>0$. Thus, there exist $C>0$ and $t_0>1$ such that $f(t)\geq C  t^{p_{\sup}-\epsilon}$ for all $t>t_0$. Then, from (\ref{BLow2}) for $\tau>0$ small we have
\begin{eqnarray*}
C^{-1}(p_{\sup} -1-\epsilon)^{-1} (c_N')^{-(p_{\sup}-1-\epsilon)} \tau^{\rho(p_{\sup}-1-\epsilon)/2r} &\geq& C^{-1} (p_{\sup} -1-\epsilon)^{-1} \|S(\tau) v_0\|_{L^\infty}^{1-p_{\sup}+\epsilon} \\
&=& C^{-1} \int_{\|S(t)v_0\|_{L^\infty}}^{\infty}  \frac{d\sigma}{\sigma^{p_{\sup}-\epsilon}} \\
&\geq & \int_{\|S(\tau)v_0 \|_{L^\infty}}^{\infty}  \frac{d\sigma}{f(\sigma)}.
\end{eqnarray*}
 Thus, Proposition \ref{Prop1},
$$1\geq  \int_0^\tau h(\sigma) d\sigma \cdot \left(\int_{\|S(\tau)v_0\|_{L^\infty}}^{\infty}  \frac{d\sigma}{f(\sigma)}\right)^{-1} \geq C (p_{\sup} -1-\epsilon) ( c_N')^{-(1-p_{\sup}+\epsilon)} \, \tau^{-\frac{\rho(p_{\sup}-1-\epsilon)}{2r}} \cdot \int_0^\tau h(\sigma) d\sigma,$$
for all $\tau$ small enough. This contradicts condition (\ref{BLOWUP}).


\medskip
\textbf{Proof of Theorem \ref{Main3}} Suppose that problem (\ref{Eqq2}) has two local $L^r$-solutions $u$ and $v$ in the class (\ref{Con.uni}) defined on some interval $(0,T)$ with initial data $u_0$ and $v_0$ respectively, that is,
\begin{equation}\label{Blow7}
    u\left ( t \right )=S\left ( t \right )u_0+\int_{0}^{t}S\left ( t-\sigma \right )h\left ( \sigma \right )g\left ( u\left ( \sigma \right ) \right )d\sigma
\end{equation}
and
\begin{equation}\label{Blow8}
    v\left ( t \right )=S\left ( t \right )v_0+\int_{0}^{t}S\left ( t-\sigma \right )h\left ( \sigma \right )g\left ( v\left ( \sigma \right ) \right )d\sigma.
\end{equation}
Subtracting (\ref{Blow8}) from (\ref{Blow7}), and using estimate (\ref{reg}) we get 
$$\begin{array}{l}
\left\| u\left ( t \right )-v\left ( t \right )  \right \|_{L^{r}}\\
=\left \| S\left ( t \right )\left ( u_0-v_0 \right ) \right \|_{L^{r}}+\left\|\int_{0}^{t}S\left ( t-\sigma \right )h\left ( \sigma \right )\left [g\left ( u\left ( \sigma \right )\right ) -g\left ( v\left ( \sigma \right ) \right )    \right ]d\sigma \right \|_{L^{r}}\\
\leq \left \| u_0-v_0 \right \|_{L^{r}}+\int_{0}^{t}\left \| \frac{g\left ( u\left ( \sigma \right )\right ) -g\left ( v\left ( \sigma \right ) \right )}{u\left ( \sigma \right )-v\left ( \sigma \right )} \right \|_{L^\infty }h\left ( \sigma \right )\left \| S\left ( t-\sigma \right ) \left ( u\left ( \sigma \right )-v\left ( \sigma \right ) \right )\right \|_{L^{r}}d\sigma\\
\leq\left \| u_0-{v}_0 \right \|_{L^{r}}+ \int_{0}^{t}L\left ( C_1 \sigma^{-\rho/2r} \right )h\left ( \sigma \right )\left \| u\left ( \sigma \right )-v\left ( \sigma \right ) \right \|_{L^{r}}d\sigma.
\end{array}$$
By condition (\ref{def2}) and Gronwall's inequality we obtain
$$\|u(t)-v(t)\|_{L^r} \leq \|u_0-v_0\|_{L^r} \exp \left [\int_0^T L(C_1 \sigma^{-\rho/2r}) h(\sigma) d\sigma \right],$$ for $t\in (0,T)$. Hence, the uniqueness follows taking $u_0=v_0$.



\end{document}